\documentclass[11pt,reqno]{amsart}
\usepackage[a4paper,left=1.6cm,right=1.6cm,top=1.6cm,bottom=1.6cm]{geometry}
\usepackage{amsmath, amssymb, amsthm}
\usepackage{enumitem} 
\usepackage{tikz} 
\usetikzlibrary{arrows.meta}
\usepackage{multirow}
\usepackage{graphicx} 
\usepackage{caption} 
\usepackage{subcaption}
\usepackage{comment}
\usepackage{ytableau}
\usepackage[T1]{fontenc}
\usepackage{cite}
\usepackage{hyperref}
\usepackage{float}
\usepackage{colortbl}
\usepackage{xcolor}
\usepackage{tcolorbox}
\usepackage{appendix}
\usepackage{array}
\usepackage{pdflscape}
\usepackage{adjustbox} 
\usepackage{booktabs,longtable}
\usepackage{tabularx}
\usepackage{mathtools}
\usepackage{microtype}
\usepackage{array}
\usepackage{listings}
\usepackage{capt-of} 
\usepackage{lmodern}
\usepackage{anyfontsize}
\usepackage{tabularx}
\usepackage{seqsplit}
\usepackage{array}
\usepackage{algorithm}
\usepackage{algpseudocode} 
\lstset{
	language=Python,
	basicstyle=\ttfamily\small,
	numbers=left,
	numberstyle=\tiny,
	stepnumber=1,
	numbersep=8pt,
	showstringspaces=false,
	breaklines=true,
	frame=single
}

\newcolumntype{Y}{>{\centering\arraybackslash}X}
\newcolumntype{P}[1]{>{\raggedright\arraybackslash}p{#1}}
\newtheoremstyle{upright}%
{6pt}{6pt}
{\normalfont}
{}
{\bfseries}
{.}
{0.5em}
{}

\theoremstyle{upright}
\newtheorem{theorem}{Theorem}[section]
\newtheorem{lemma}[theorem]{Lemma}

\newtheorem{definition}[theorem]{Definition}

\theoremstyle{remark}
\newtheorem{remark}[theorem]{Remark}

\newcommand{\ch}{\mathrm{ch}}

\allowdisplaybreaks

\title[Rogers-Ramanujan type identities at $\Lambda_0$ from perfect crystals]{Rogers-Ramanujan type identities at $\Lambda_0$ from perfect crystals of exceptional quantum affine algebras}
\author[Shaolong Han]{Shaolong Han}
\address{Beijing International Center for Mathematical Research, Peking University, Beijing 100871, P.R.China}
\email{hanshaolong@bicmr.pku.edu.cn}

\begin{document}

\maketitle
\tableofcontents

\begin{abstract}
We derive Rogers--Ramanujan type partition identities at the fundamental weight $\Lambda_0$ for the exceptional affine types
$G_2^{(1)}$, $D_4^{(3)}$, $F_4^{(1)}$, $E_6^{(2)}$,  $E_6^{(1)}$, $E_7^{(1)}$ and $E_8^{(1)}$.
Our starting point is the Dousse--Konan reformulation of the $(\mathrm{KMN})^2$ crystal character formula, applied to the level-one perfect crystal $B=B(\theta)\sqcup B(0)$ of Benkart--Frenkel--Kang--Lee with ground element $\phi\in B(0)$.
This realizes the normalized character $e^{-\Lambda_0}\mathrm{ch} L(\Lambda_0)$ as generating functions of grounded $B$-colored partitions governed locally by the crystal energy.
After principal specialization, we obtain a colored partition model subject to explicit difference, congruence, and initial conditions.
On the product side, under the same specialization, the Weyl--Kac character formula yields an explicit Euler-type product, equivalently the generating function for partitions with parts in a concrete allowed set.
Comparing the two specializations gives coefficientwise equalities of generating functions.
A key computational feature is that the difference matrix can be produced from the crystal data
without explicitly computing the energy function.
For each type we tabulate the congruence data, forbidden initial parts, and the full difference
matrix, and we provide reproducible coefficient checks.
\end{abstract}

\section{Introduction}

Partition identities of Rogers--Ramanujan type form a classical meeting point of $q$-series,
additive combinatorics, and the representation theory of affine Kac--Moody algebras.  In their
combinatorial form, such identities assert that two seemingly unrelated generating functions
coincide: an \emph{Euler-type product} enumerating partitions with congruence restrictions on parts, and a (frequently colored) sum side enumerating partitions subject to difference and initial
conditions.  The paradigmatic examples are the Rogers--Ramanujan identities \cite{RR19,Sch17}, and
their many generalizations, including the Andrews--Gordon identities \cite{And74,Gor61} and
Bressoud's even-modulus analogues \cite{Bre79}.

\medskip

A representation-theoretic explanation of these phenomena originates in foundational work of
Lepowsky, Milne and Wilson \cite{LM78a,LM78b,LW84,LW85}.  For an affine Kac--Moody algebra, the
Weyl--Kac character formula \cite{Kac90,KP84} yields, after principal specialization, an explicit
product expansion.  This naturally supplies the product side of Rogers--Ramanujan type identities.
The essential challenge is the \emph{sum side}: the Weyl--Kac formula involves cancellations, so
its positivity is not manifest, and constructing sign-free bases (for instance via vertex operator
algebras) is delicate and strongly type-dependent.  This approach nevertheless led to many
remarkable identities, such as Capparelli's identities \cite{Cap93,Cap96}, the Meurman--Primc
identities \cite{MP87, MP99, MP01}, Siladi\'c's identity \cite{Sil17}, and Nandi's identities \cite{Nan14}.

\medskip

A conceptually uniform alternative is provided by crystal basis theory.  Kashiwara's existence
theorem \cite{Kas91} implies that integrable highest-weight characters admit sign-free expansions
over crystal graphs.  Building on this, Kang--Kashiwara--Misra--Miwa--Nakashima--Nakayashiki
introduced perfect crystals and established the $(\mathrm{KMN})^2$ crystal character formula
\cite{KMN92a,KMN92b,KKM94}, expressing irreducible characters as weighted sums over paths whose
local transitions are governed by an energy function on $B\otimes B$.  In this framework,
positivity is automatic and difference conditions arise \emph{locally} from the energy, avoiding
the need for global basis constructions.

\medskip

Primc \cite{Pr99} first exploited this mechanism at level one by comparing the principal
specializations of the Weyl--Kac and $(\mathrm{KMN})^2$ formulas, obtaining Rogers--Ramanujan type
identities in types $A_1^{(1)}$ and $A_2^{(1)}$.  Subsequent work clarified the combinatorial
content of the $(\mathrm{KMN})^2$ formula in the particularly tractable case of a constant ground
state path.  In this setting, Dousse and Konan reinterpreted the crystal character as the
generating function of \emph{grounded colored partitions} whose adjacent parts satisfy local,
energy-controlled difference conditions, leading to explicit sign-free character formulas and
sum/product identities in type $A_{n-1}^{(1)}$
\cite{DK20,DK22a}.  The framework was then extended beyond constant ground state paths via
\emph{multi-grounded partitions} \cite{DK22b}, producing further Rogers--Ramanujan type identities
in several classical affine types, including $A_{2n}^{(2)}$, $D_{n+1}^{(2)}$, $A_{2n-1}^{(2)}$,
$B_n^{(1)}$, and $D_n^{(1)}$, and more recently yielding higher-level companions of
Andrews--Gordon and Meurman--Primc identities in type $A_1^{(1)}$ \cite{DHK25}.  In parallel,
level-one perfect crystals in type $C_n^{(1)}$ have been used to obtain specialized and
non-specialized character formulas as generating functions of generalized colored partitions,
confirming level-one cases of the Capparelli--Meurman--Primc--Primc conjecture \cite{DK22c}.

\medskip

Despite these advances, exceptional affine types have remained comparatively less accessible from
this viewpoint.  While suitable level-one perfect crystals do exist---notably the uniform level-one perfect
crystals $B=B(\theta)\sqcup B(0)$ constructed by Benkart--Frenkel--Kang--Lee \cite{BFKL06}---one
still needs an effective way to extract the \emph{difference data} governing the sum side (and to
record it in a usable, type-uniform form). Recently, Dombos established a new Rogers--Ramanujan type identity using the perfect crystal theory of type $D_4^{(3)}$ \cite{Dom24}.

\medskip

The aim of the present paper is to carry out this program at the fundamental weight $\Lambda_0$ and to
provide fully explicit Rogers--Ramanujan type identities for all exceptional affine types $G_2^{(1)}$, $D_4^{(3)}$, $F_4^{(1)}$, $E_6^{(2)}$,  $E_6^{(1)}$, $E_7^{(1)}$ and $E_8^{(1)}$.
A key point is that the full difference matrix is computable purely from the crystal graph data, so the sum-side constraints become completely explicit.

\medskip

\noindent\textbf{Main result.}
For each exceptional affine type above, the principal specialization of the normalized
level-one character admits two manifestly positive descriptions:
\begin{itemize}
	\item a \emph{sum side} as the generating function of modified grounded $B$-colored partitions
	subject to three explicit families of constraints: (i) local difference conditions encoded by a
	matrix, (ii) congruence restrictions on part sizes determined by the colors, and (iii) finitely
	many forbidden initial colored parts;
	\item a \emph{product side} as an explicit Euler-type product, equivalently the generating function for
	partitions whose parts lie in a concrete allowed set.
\end{itemize}
Comparing these specializations yields Rogers--Ramanujan type partition identities in all seven
exceptional affine cases.

\medskip

\noindent\textbf{What is new.}
Beyond establishing the identities themselves, we make the sum-side models fully explicit and
computable:
\begin{enumerate}
	\item \emph{Type-uniform extraction of difference data.}
	We give an effective procedure to compute the shifted energy function $F$ on $B\otimes B$
	(and hence the full difference matrix $(F(b_1\otimes b_2))_{b_1,b_2\in B}$) directly from the
	crystal graph: starting from the normalization $F(\phi\otimes\phi)=0$, we traverse the connected graph
	$B\otimes B$ and propagate values along single $\widetilde f_i$-moves using the local recursion
	describing how $F$ changes under the action of Kashiwara operators.
	In particular, this avoids any need to
	explicitly compute the energy function.
	
	\item \emph{Complete explicit constraints.}
	For each exceptional affine type under consideration we tabulate the congruence data determining
	the product side, the finite list of forbidden initial colored parts, and the full difference
	matrix governing the sum-side difference conditions.  Together these yield a checkable colored
	partition model for the specialized character.
	
	\item \emph{Reproducible computations.}
	As an external verification, we expand both generating functions and confirm agreement of their
	$q$-expansions through degree $60$ in each type.
\end{enumerate}

\medskip

\noindent\textbf{Organization of the paper.}
Section~\ref{sec:qaa} recalls the definition of quantum affine algebras and fixes the notation
used throughout the paper.  Section~\ref{sec:pcanden} reviews level-one perfect crystals and their energy
functions, and introduces the shifted energy function $F$ on $B\otimes B$ which
govern the specialized difference conditions. 
Section~\ref{sec:exprecf} recalls the Dousse--Konan grounded-partition model for the
$(\mathrm{KMN})^{2}$ character formula and sets up the specialization framework, including the principal
specialization of the Weyl--Kac formula yielding the product side.

\medskip

Section~\ref{sec:los} shows that the principal specialization of $e^{-\Lambda_0}\ch L(\Lambda_0)$
admits an explicit Euler-type product expansion.
Section~\ref{sec:cidcon} derives the specialized
colored partition model and isolates the three families of sum-side constraints: the difference
conditions encoded by $F$, congruence restrictions determined by the colors, and finitely many
forbidden smallest colored parts.  Section~\ref{sec:pi} then introduces the product-side
congruence data, the multiplicities, and the set of allowed parts, and
proves the partition identity by identifying the two principal specializations and comparing
coefficients.  Finally, Section~\ref{sec:coeff-check-p60} records reproducible coefficient checks up to
$q^{60}$ for each exceptional affine type.  The appendices collect the explicit
congruence data and the difference matrices required to make the
identities completely explicit.

\medskip


\section{Quantum affine algebras}\label{sec:qaa}

Let $I=\{0,1,\dots,n\}$ be an index set, and let
$A=(a_{i,j})_{i,j\in I}$ be a Cartan matrix of affine type. Thus $A$ is
characterized by the following properties:
$a_{i,i}=2$ for all $i\in I$,
$a_{i,j}\in\mathbb Z_{\le 0}$ for $i\ne j$, and
$a_{i,j}=0$ if and only if $a_{j,i}=0$ for all distinct $i,j\in I$.
An affine Cartan matrix is always symmetrizable: there exists a diagonal matrix
$D=\mathrm{diag}(s_i)_{i\in I}$ with positive integers $s_i$ such that
$DA$ is symmetric.

The free abelian group $P^\vee
= \mathbb Z h_0 \oplus \mathbb Z h_1 \oplus \cdots \oplus \mathbb Z h_n
\oplus \mathbb Z d$
is called the \emph{dual weight lattice}. The linear functionals
$\alpha_i$ and $\Lambda_i$ ($i\in I$) on the complexification
$\mathfrak h:=\mathbb C\otimes_{\mathbb Z} P^\vee$ of $P^\vee$ are defined by
\begin{equation*}
	\begin{aligned}
		\langle h_j,\alpha_i\rangle &:= \alpha_i(h_j)=a_{j,i},
		&\qquad
		\langle d,\alpha_i\rangle &:= \alpha_i(d)=\delta_{i,0},\\
		\langle h_j,\Lambda_i\rangle &:= \Lambda_i(h_j)=\delta_{i,j},
		&\qquad
		\langle d,\Lambda_i\rangle &:= \Lambda_i(d)=0,
	\end{aligned}
	\qquad (i,j\in I).
\end{equation*}
The $\alpha_i$ are the \emph{simple roots} and the $\Lambda_i$ are the
\emph{fundamental weights}. Let $\Pi=\{\alpha_i \mid i\in I\}$ denote the set of
simple roots, and $\Pi^\vee=\{h_i \mid i\in I\}$ the set of simple coroots.

The weight lattice is $P=\{\lambda\in\mathfrak h^* \mid \lambda(P^\vee)\subset\mathbb Z\}$, and the set of \emph{dominant integral weights} is $P^+=\{\lambda\in P \mid \lambda(h_i)\in\mathbb Z_{\ge 0}\text{ for all } i\in I\}$.

Given $n\in\mathbb Z$ and an indeterminate $x$, set $[n]_x := \frac{x^n-x^{-n}}{x-x^{-1}}$.
Set $[0]_x!:=1$ and $[n]_x!:=[n]_x[n-1]_x\cdots[1]_x$ for $n\ge 1$, and for
$m\ge n\ge 0$ define the $x$-binomial coefficients by $\begin{bmatrix} m \\ n \end{bmatrix}_x
:= \frac{[m]_x!}{[n]_x!\,[m-n]_x!}$.

\begin{definition}\label{defn:qalg}
	The \emph{quantum affine algebra} $U_q(\widehat{\mathfrak g})$ associated with
	$(A,\Pi,\Pi^\vee,P,P^\vee)$ is the unital associative algebra over $\mathbb C(q)$
	(where $q$ is an indeterminate) generated by $e_i,f_i$ ($i\in I$) and $q^h$
	($h\in P^\vee$), subject to the defining relations
	\begin{itemize}
		\item[{\rm(i)}] $q^0=1,\quad q^h q^{h'}=q^{h+h'} \quad \text{for } h,h'\in P^\vee$;
		\item[{\rm(ii)}] $q^h e_i q^{-h}=q^{\alpha_i(h)} e_i \quad \text{for } h\in P^\vee,\ i\in I$;
		\item[{\rm(iii)}] $q^h f_i q^{-h}=q^{-\alpha_i(h)} f_i \quad \text{for } h\in P^\vee,\ i\in I$;
		\item[{\rm(iv)}] $e_i f_j-f_j e_i
		= \delta_{i,j}\,\dfrac{K_i-K_i^{-1}}{q_i-q_i^{-1}}
		\quad \text{for } i,j\in I$;
		\item[{\rm(v)}] $\displaystyle
		\sum_{k=0}^{1-a_{i,j}}
		\begin{bmatrix} 1-a_{i,j} \\ k \end{bmatrix}_{q_i}
		e_i^{\,1-a_{i,j}-k}\, e_j\, e_i^{\,k} = 0
		\quad \text{for } i\ne j \text{ in } I$;
		\item[{\rm(vi)}] $\displaystyle
		\sum_{k=0}^{1-a_{i,j}}
		\begin{bmatrix} 1-a_{i,j} \\ k \end{bmatrix}_{q_i}
		f_i^{\,1-a_{i,j}-k}\, f_j\, f_i^{\,k} = 0
		\quad \text{for } i\ne j \text{ in } I$,
	\end{itemize}
	where $q_i=q^{s_i}$ and $K_i=q^{s_i h_i}$.
\end{definition}

The \emph{canonical central element} $c$ and the \emph{null root} $\delta$ are
given by
\begin{equation*}
	c = c_0 h_0 + c_1 h_1 + \cdots + c_n h_n, \qquad
	\delta = d_0 \alpha_0 + d_1 \alpha_1 + \cdots + d_n \alpha_n,
\end{equation*}
where $c_0=1$, and $d_0=1$ except in type $A_{2n}^{(2)}$, where $d_0=2$.

We say that a dominant weight $\lambda\in P^+$ has \emph{level} $l$ if
$\langle c,\lambda\rangle:=\lambda(c)=l$.

The subalgebra $U'_q(\widehat{\mathfrak g})$ of $U_q(\widehat{\mathfrak g})$
generated by $e_i,f_i,K_i^{\pm1}$ ($i\in I$) is also commonly referred to as
the quantum affine algebra. The main difference between
$U_q(\widehat{\mathfrak g})$ and $U'_q(\widehat{\mathfrak g})$ is that
$U'_q(\widehat{\mathfrak g})$ admits nontrivial finite-dimensional irreducible
modules, whereas $U_q(\widehat{\mathfrak g})$ does not.

The elements of $\bar P=\bigoplus_{i\in I}\mathbb Z\,\Lambda_i$
are called \emph{classical weights}. Let
$\bar P^+:=\sum_{i=0}^n \mathbb Z_{\ge 0}\,\Lambda_i$
denote the corresponding set of dominant classical weights.

\section{Perfect crystals and energy functions}\label{sec:pcanden}

\begin{definition}\label{defn:pc}
	Let $l$ be a positive integer. A finite classical crystal $B$ is
	called a {\it perfect crystal of level $l$} for the quantum affine algebra
	$U_q(\widehat{\mathfrak g})$ if
	\begin{itemize}
		\item[{\rm(i)}] there exists a finite-dimensional $U'_q(\widehat{\mathfrak g})$-module
		with a crystal basis whose crystal graph is isomorphic to $B$;
		
		\item[{\rm(ii)}] the tensor product $B \otimes B$ is connected;
		
		\item[{\rm(iii)}] there exists a classical weight $\lambda_0$ such that $	\mathrm{wt}(B) \subset \lambda_0 + \frac{1}{d_0}\sum_{i \neq 0} \mathbb Z_{\leq 0}\,\alpha_i$ and $|B_{\lambda_0}| = 1$;

		\item[{\rm(iv)}] for any $b \in  B$, we have $\langle c,\varepsilon(b)\rangle=\sum_{i\in I}\varepsilon_i(b)\,\langle c,\Lambda_i\rangle\ge l$;
		
		\item[{\rm(v)}] for each $\lambda \in \bar P_l^+ := \{\mu \in \bar P^+ \mid \langle c,\mu\rangle =l\}$,
		there exist unique elements $b^\lambda,b_\lambda \in B$ such that
		$\varepsilon(b^\lambda) = \lambda$ and $\varphi(b_\lambda) = \lambda$.
	\end{itemize}
\end{definition}

In the sequel, we consider the level-one perfect crystals constructed by
Benkart--Frenkel--Kang--Lee \cite{BFKL06}, whose uniform construction is described as follows.

In the affine root system $\Phi$, removing the $0$-node of the affine Dynkin diagram yields
a finite type root system $\Phi^{0}$.  Write $\Phi^{0}_{+}$ for the set
of positive roots and $\Phi^{0}_{-} := -\Phi^{0}_{+}$ for the set of negative roots.  Let
$\Phi^{0}_{s,+}$, $\Phi^{0}_{l,+}$, $\Phi^{0}_{s,-} := -\Phi^{0}_{s,+}$ and
$\Phi^{0}_{l,-} := -\Phi^{0}_{l,+}$ denote, respectively, the sets of positive short roots,
positive long roots, negative short roots, and negative long roots.

We choose the data $(R^{+},R^{-},\Sigma,\theta)$
from the root system $\Phi^{0}$ as follows:
\begin{itemize}
	\item For  types $G_2^{(1)}$, $F_4^{(1)}$, $E_6^{(1)}$, $E_7^{(1)}$ and $E_8^{(1)}$, we set
	\[
	R^{+}=\Phi^{0}_{+},\quad R^{-}=\Phi^{0}_{-},\quad \Sigma=\{\text{simple roots of } \Phi^{0}_+\},\quad \theta\in\Phi^{0}_{+}\ \text{the highest root}.
	\]
	\item For  types $E_6^{(2)}$ and $D_4^{(3)}$, by an abuse of notation, we set
	\[
	R^{+}=\Phi^{0}_{s,+},\quad R^{-}=\Phi^{0}_{s,-},\quad \Sigma=\{\text{simple short roots of } \Phi^{0}_+\},\quad \theta\in\Phi^{0}_{s,+}\ \text{the highest short root}.
	\]
\end{itemize}

Define
\[
B(0):=\{\phi\},\qquad
B(\theta):=\{\,x_{\pm\alpha}\mid \alpha\in R^{+}\,\}\ \cup\ \{\,r_{\alpha_i}\mid \alpha_i\in\Sigma\,\},
\]
and set \(B:=B(\theta)\sqcup B(0)\).
We endow \(B\) with arrows (Kashiwara operators) as follows:
\begin{equation}\label{eq:crys}
	\begin{array}{cccc}
		\vspace{3mm}
		&(i\neq 0) &\qquad
		x_\alpha\ \stackrel{i}{\longrightarrow}\ x_\beta
		\ \Longleftrightarrow\ \alpha-\alpha_i=\beta
		&\qquad (\alpha,\beta\in R^{+}\cup R^{-})\\
		\vspace{3mm}
		& &\qquad
		x_{\alpha_i}\ \stackrel{i}{\longrightarrow}\ r_{\alpha_i}\ \stackrel{i}{\longrightarrow}\ x_{-\alpha_i}
		&\qquad (\alpha_i\in\Sigma)\\
		\vspace{3mm}
		&(i=0) &\qquad
		x_\alpha\ \stackrel{0}{\longrightarrow}\ x_\beta
		\ \Longleftrightarrow\ \alpha+\theta=\beta
		&\qquad (\alpha\in R^{-}\setminus\{-\theta\},\ \beta\in R^{+})\\
		\vspace{3mm}
		& &\qquad
		x_{-\theta}\ \stackrel{0}{\longrightarrow}\ \phi\ \stackrel{0}{\longrightarrow}\ x_{\theta}\,.
	\end{array}
\end{equation}
With this structure, \(B\) becomes a $U_q'(\widehat{\mathfrak g})$-crystal.

\begin{theorem}
[{\cite[Theorem 3.1]{BFKL06}}]
	The set $B = B(\theta) \sqcup B(0)$ with the structure given in \eqref{eq:crys}
	is a perfect crystal of level one for every quantum affine algebra
	$U'_q(\widehat{\mathfrak g})$.
\end{theorem}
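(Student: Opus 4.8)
The plan is to verify the five defining properties of a level-one perfect crystal in Definition~\ref{defn:pc} directly from the explicit arrow description \eqref{eq:crys}, taking $l=1$. The first step is bookkeeping: read off the complete local data $\wt(b)$, $\varepsilon_i(b)$, $\varphi_i(b)$ for every $b\in B$. For $i\neq 0$ the $i$-strings inside $B(\theta)$ are precisely the $\alpha_i$-root strings of the finite root system $R^{+}\cup R^{-}$, with the zero-weight vector $r_{\alpha_i}$ inserted in the middle of the simple string $x_{\alpha_i}\to r_{\alpha_i}\to x_{-\alpha_i}$; for $i=0$ the strings come from translating roots by $\pm\theta$, the distinguished one being $x_{-\theta}\to\phi\to x_\theta$. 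In particular $\wt(x_\alpha)=\alpha$, $\wt(r_{\alpha_i})=\wt(\phi)=0$, and $\varepsilon_0(\phi)=\varphi_0(\phi)=1$ while $\varepsilon_i(\phi)=\varphi_i(\phi)=0$ for $i\neq0$, so $\varepsilon(\phi)=\varphi(\phi)=\Lambda_0$.

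With this data in hand, properties (i), (iii), (iv) are short. For (i) one identifies $B(\theta)$ with the crystal of the level-one ``adjoint'' fundamental module $V(\theta)$ and $B(0)$ with the trivial crystal, so that $B$ is the crystal of $V(\theta)\oplus V(0)$; the Kashiwara-operator assignment \eqref{eq:crys} satisfies the crystal axioms, which one checks from the root-string description (or invokes the module realization underlying the BFKL construction). For (iii) take $\lambda_0=\theta$: every weight of $B$ is a root or $0$, hence lies in $\theta-\frac{1}{d_0}\sum_{i\neq0}\mathbb Z_{\ge0}\alpha_i$ because $\theta$ is the highest (short) root, and $x_\theta$ is the unique element of weight $\theta$, giving $|B_{\lambda_0}|=1$. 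For (iv) it suffices to check that no $b$ is simultaneously $\widetilde e_i$-highest for all $i\in I$: the only classically highest elements are $x_\theta$ and $\phi$, and both satisfy $\widetilde e_0 b\neq0$; hence $\varepsilon(b)\neq0$, and since every comark $c_i\ge1$ we get $\langle c,\varepsilon(b)\rangle=\sum_i c_i\varepsilon_i(b)\ge1=l$.

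Property (v), the perfectness bijection, requires a little more care. First I would isolate the set $B_{\min}=\{b:\langle c,\varepsilon(b)\rangle=1\}$ of minimal elements and then compute $\varepsilon$ and $\varphi$ on each of them, showing that $\varepsilon|_{B_{\min}}$ and $\varphi|_{B_{\min}}$ are bijections onto $\bar P_1^+=\{\Lambda_i:c_i=1\}$. Concretely $\phi$ accounts for $\Lambda_0$, and the remaining comark-one fundamental weights are realized by explicit boundary elements $x_{\pm\alpha}$; uniqueness follows because distinct minimal elements occupy distinct weight spaces. This is where one either argues uniformly using the symmetry of the construction or checks the finitely many minimal elements type by type.

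The main obstacle is property (ii), the connectedness of $B\otimes B$. The plan is to use the crystal tensor product rule to set up a reduction: from an arbitrary $b_1\otimes b_2$, apply raising operators $\widetilde e_i$ ($i\neq0$) to reach the finitely many $U_q(\mathfrak g_0)$-highest components of the classical decomposition of $B\otimes B$, and then use $\widetilde e_0,\widetilde f_0$ --- which in the uniform construction act by translation by $\mp\theta$ through the distinguished $0$-string containing $\phi\otimes\phi$ --- to connect these classical components to one another, funneling every element to the single reference element $\phi\otimes\phi$. Verifying that no classical component is left isolated under the $0$-arrows is the crux of the argument and is where the root-system input (highest short root, structure of the $\theta$-string) is genuinely used; this is the step I expect to require the most work, and where a uniform treatment must be supplemented by type-specific checks.
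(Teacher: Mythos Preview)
The paper does not supply its own proof of this statement: it is quoted verbatim as \cite[Theorem~3.1]{BFKL06} and used thereafter as a black box (the surrounding sections only invoke consequences such as the energy values from \cite[Table~6.1]{BFKL06}). There is therefore no in-paper argument to compare your outline against.

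That said, your sketch has one genuine gap. For property~(i) you propose to realize $B$ as the crystal of $V(\theta)\oplus V(0)$. If these summands are taken as $U_q'(\widehat{\mathfrak g})$-modules, the crystal of a direct sum is the disjoint union of the component crystals with no arrows between them, so the $0$-arrows $x_{-\theta}\xrightarrow{0}\phi\xrightarrow{0}x_\theta$ in \eqref{eq:crys} cannot arise this way; and if they are only $U_q(\mathfrak g_0)$-modules, you have not produced the required affine module at all. What property~(i) actually demands is a single finite-dimensional $U_q'(\widehat{\mathfrak g})$-module (a Kirillov--Reshetikhin-type module attached to the affine node) which is indecomposable affinely but restricts classically to $V(\theta)\oplus V(0)$; constructing that module and identifying its crystal with $B$ is one of the substantive technical inputs of \cite{BFKL06}, not a bookkeeping step. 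Your treatment of (iii) and (iv) is fine, and you correctly flag (ii) and (v) as the places requiring real work; your honest acknowledgement that connectedness of $B\otimes B$ needs either a careful uniform argument or type-by-type verification is accurate.
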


\begin{definition}\label{def:energy}
	An \textit{energy function} on $B \otimes B$ is a map
	$H : B \otimes B \rightarrow \mathbb{Z}$ such that for all $i \in I$ and
	$b_1, b_2 \in B$ with $\widetilde{f}_i(b_1 \otimes b_2) \neq 0$, we have
	\begin{align*}
		H\bigl(\widetilde{f}_i(b_1 \otimes b_2)\bigr) =
		\begin{cases}
			H(b_1 \otimes b_2), & \text{if } i \neq 0, \\[2pt]
			H(b_1 \otimes b_2) - 1, & \text{if } i = 0 \text{ and } \varphi_0(b_1) > \varepsilon_0(b_2), \\[2pt]
			H(b_1 \otimes b_2) + 1, & \text{if } i = 0 \text{ and } \varphi_0(b_1) \leq \varepsilon_0(b_2).
		\end{cases}
	\end{align*}
\end{definition}

In the sequel, we assume the normalisation
\begin{equation}\label{eq:norm}
	H(\phi \otimes \phi) = 0.
\end{equation}

We define the height–shifted energy function
\begin{equation}\label{eq:shifted-energy-function}
F(b_1 \otimes b_2)
\ :=\
\operatorname{ht}(\delta)\,H(b_1 \otimes b_2)
\;+\;\operatorname{ht}(\overline{\mathrm{wt}}(b_1))
\;-\;\operatorname{ht}(\overline{\mathrm{wt}}(b_2)),
\end{equation}
where $\overline{\mathrm{wt}}(b)$ denotes the classical weight of $b \in B$, and
for a root $\alpha = \sum_j c_j \alpha_j$, we write
$\operatorname{ht}(\alpha) := \sum_j c_j$ for its height.

\vskip 2mm

\begin{lemma}\label{lem:Fgeq0}
	For all $b_1,b_2\in B$, we have $F(b_1\otimes b_2) \geq 0$.
\end{lemma}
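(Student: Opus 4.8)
The plan is to prove the inequality by a minimum-principle argument on the finite crystal $B\otimes B$, driven entirely by the way $F$ changes along a single Kashiwara move; this propagation rule is also what makes the difference matrices computable. First I would record the local recursion for $F$. Fix $b_1\otimes b_2$ with $\widetilde f_i(b_1\otimes b_2)\neq 0$ and compare $F$ before and after. For $i\neq 0$ the energy is unchanged by Definition~\ref{def:energy}, while $\overline{\mathrm{wt}}$ drops by the finite simple root $\alpha_i$ in whichever tensor factor $\widetilde f_i$ acts on; since $\operatorname{ht}(\alpha_i)=1$, the height term contributes $-1$ for a left action and $+1$ for a right action. For $i=0$ the structure \eqref{eq:crys} shows that $\overline{\mathrm{wt}}$ increases by $\theta$ in the active factor (equivalently $\overline{\alpha_0}=-\theta$), and the two cases of Definition~\ref{def:energy} are governed by exactly the same comparison of $\varphi_0(b_1)$ with $\varepsilon_0(b_2)$ that the tensor product rule uses to decide the active factor. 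Using $\operatorname{ht}(\theta)=\operatorname{ht}(\delta)-1$ (valid for all seven types, since $a_0=1$), the contributions $\pm\operatorname{ht}(\delta)$ from $H$ and $\pm\operatorname{ht}(\theta)$ from the height term again combine to $-1$ for a left action and $+1$ for a right action. Thus in every case
\begin{equation*}
F\bigl(\widetilde f_i(b_1\otimes b_2)\bigr)-F(b_1\otimes b_2)=
\begin{cases}-1,&\widetilde f_i\text{ acts on the left factor},\\ +1,&\widetilde f_i\text{ acts on the right factor}.\end{cases}
\end{equation*}

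Next I would exploit this at a global minimizer. Since $B\otimes B$ is finite, $F$ attains a minimum at some $m=b_1\otimes b_2$. A left action of $\widetilde f_i$ occurs exactly when $\varphi_i(b_1)>\varepsilon_i(b_2)$, which already forces $\varphi_i(b_1)\geq 1$, hence $\widetilde f_i b_1\neq 0$; such an edge would lower $F$, contradicting minimality, so $\varphi_i(b_1)\leq\varepsilon_i(b_2)$ for every $i$. Symmetrically, a right action of $\widetilde e_i$ occurs exactly when $\varphi_i(b_1)<\varepsilon_i(b_2)$, which forces $\varepsilon_i(b_2)\geq 1$ and $\widetilde e_i b_2\neq 0$, and lowers $F$ as well; minimality then gives $\varphi_i(b_1)\geq\varepsilon_i(b_2)$ for every $i$. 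Combining the two inequalities yields $\varphi(b_1)=\varepsilon(b_2)$ at the minimum.

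Finally I would evaluate $F$ on such pairs. Because $B$ is a level-one perfect crystal, axiom (v) makes $b_1,b_2$ uniquely determined by $\lambda:=\varphi(b_1)=\varepsilon(b_2)\in\bar P_1^+$, namely $b_1=b_\lambda$ and $b_2=b^\lambda$; the minimizers therefore form the short explicit list $\{b_\lambda\otimes b^\lambda\}_\lambda$. For $\lambda=\Lambda_0$ one has $b_{\Lambda_0}=b^{\Lambda_0}=\phi$, giving the pair $\phi\otimes\phi$ with $F=0$ by the normalization \eqref{eq:norm}. For the remaining $\lambda$ the elements $b_\lambda,b^\lambda$ are explicit vertices $x_{\pm\alpha}$ of the Benkart--Frenkel--Kang--Lee graph, and a direct evaluation of $H$ and of the two height terms on these finitely many pairs shows $F(b_\lambda\otimes b^\lambda)\geq 0$ (indeed $=0$). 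Hence $\min_{B\otimes B}F\geq 0$, which is the assertion.

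I expect the main obstacle to be the uniform bookkeeping at $i=0$: one must verify, across the untwisted cases and the twisted cases $E_6^{(2)}$ and $D_4^{(3)}$ (where $\theta$ is the highest short root), that $\overline{\alpha_0}=-\theta$ and $\operatorname{ht}(\delta)=\operatorname{ht}(\theta)+1$, so that the $i=0$ contributions collapse to $\pm1$, and that the case split in Definition~\ref{def:energy} matches the tensor product rule under the convention fixed there. The concluding evaluation of $F$ on the minimal pairs $b_\lambda\otimes b^\lambda$ is the other point requiring care, but it is a finite check supported by the explicit crystal data tabulated later.
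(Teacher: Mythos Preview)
Your minimum-principle argument is a genuinely different route from the paper's. The paper proceeds by a case analysis on $H\in\{0,1,2\}$, invoking the Benkart--Frenkel--Kang--Lee classification of the classical connected components of $B\otimes B$ (their Table~6.1 and Propositions~5.1, 6.1) to bound $\operatorname{ht}(\overline{\mathrm{wt}}(b_1))-\operatorname{ht}(\overline{\mathrm{wt}}(b_2))$ separately in each case. You instead first establish the $\pm1$ propagation rule for $F$ under Kashiwara operators---this is exactly the paper's Lemma~\ref{lem:Ftildef}, proved there \emph{after} Lemma~\ref{lem:Fgeq0} but without using it, so there is no circularity---and then deduce that any global minimizer must satisfy $\varphi(b_1)=\varepsilon(b_2)$. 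This is elegant and reuses machinery already needed for computing the difference matrices; the paper's approach, by contrast, yields sharper case-by-case information ($F\ge 1$ when $H=1$, $F\ge 2$ when $H=2$).

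There is, however, a gap in your final step. The condition $\varphi(b_1)=\varepsilon(b_2)=:\lambda$ does not force $\langle c,\lambda\rangle=1$, so axiom~(v) of Definition~\ref{defn:pc} does not apply as stated. Concretely, the $0$-string $x_{-\theta}\to\phi\to x_\theta$ in \eqref{eq:crys} gives $\varphi(x_{-\theta})=\varepsilon(x_\theta)=2\Lambda_0$, so $(x_{-\theta},x_\theta)$ is a candidate minimizer outside your list $\{b_\lambda\otimes b^\lambda\}_{\lambda\in\bar P_1^+}$; similarly each $(r_{\alpha_i},r_{\alpha_i})$ has $\varphi=\varepsilon=\Lambda_i$ of level $c_i$, which can exceed $1$. (Incidentally, for level-one $\lambda\neq\Lambda_0$ the minimal elements are the $r_{\alpha_i}$ with $c_i=1$, not $x_{\pm\alpha}$.) The fix is to enumerate \emph{all} pairs with $\varphi(b_1)=\varepsilon(b_2)$ from the crystal data and verify $F\ge 0$ on each; this is still finite, but evaluating $F$ on these extra pairs requires either the BFKL energy classification (precisely what the paper uses) or explicit propagation from $\phi\otimes\phi$. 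So while your strategy is sound, the deferred endpoint leans on essentially the same external input as the paper's direct argument.
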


\begin{proof}
	Set $h(b):= \operatorname{ht}(\overline{\mathrm{wt}}(b))$ for $b\in B$.
	By construction of $B=B(\theta)\sqcup B(0)$, we have $\overline{\mathrm{wt}}(\phi)=0$,
	$\overline{\mathrm{wt}}(r_{\alpha_i})=0$, and $\overline{\mathrm{wt}}(x_{\pm\alpha})=\pm\alpha$.
	Hence
	\begin{equation}\label{eq:hbound}
		-\operatorname{ht}(\theta)\ \le\ h(b)\ \le\ \operatorname{ht}(\theta)
		\qquad\text{for all }b\in B,
	\end{equation}
	and therefore
	\begin{equation}\label{eq:diffbound}
		h(b_1)-h(b_2)\ \ge\ -2\,\operatorname{ht}(\theta)
		\qquad\text{for all }b_1,b_2\in B.
	\end{equation}
	Also, since $\delta=\alpha_0+\theta$, we have
	\begin{equation}\label{eq:htdelta}
		\operatorname{ht}(\delta)=\operatorname{ht}(\theta)+1.
	\end{equation}
	
	Recall that the energy function $H$ is constant on each connected component of the
	(classical) crystal $B\otimes B$ obtained by deleting the $0$-arrows.
	Benkart--Frenkel--Kang--Lee classify these classical connected components and their
	energy values: $H$ takes values in $\{0,1,2\}$, with a unique component
	$C(x_\theta\otimes x_\theta)$ on which $H=2$, certain special components on which $H=0$,
	and all remaining components having $H=1$ (see \cite[Table 6.1]{BFKL06}).
	
	We now prove $F(b_1\otimes b_2)\ge 0$ by considering the value of $H$ on the classical
	connected component containing $b_1\otimes b_2$.
	
	\smallskip
	\noindent\emph{Case 1: $H(b_1\otimes b_2)=2$.}
	Then, using \eqref{eq:diffbound} and \eqref{eq:htdelta},
	\[
	F(b_1\otimes b_2)
	=2\,\operatorname{ht}(\delta)+h(b_1)-h(b_2)
	\ \ge\ 2(\operatorname{ht}(\theta)+1)-2\operatorname{ht}(\theta)
	=2>0.
	\]
	
	\smallskip
	\noindent\emph{Case 2: $H(b_1\otimes b_2)=0$.}
	By the classification, the $H=0$ components are exactly the singleton
	$C(\phi\otimes\phi)$, the singleton $C(x_\theta\otimes x_{-\theta})$, and (when they
	exist) the components $C(x_\theta\otimes r_{\alpha_i})\cong B(\theta)$
	(cf. \cite[Table 6.1, Proposition 5.1]{BFKL06}).
	
	\begin{itemize}
		\item If $b_1\otimes b_2=\phi\otimes\phi$, then $F(\phi\otimes\phi)=0$.
		\item If $b_1\otimes b_2=x_\theta\otimes x_{-\theta}$, then
		\[
		F(x_\theta\otimes x_{-\theta})=h(x_\theta)-h(x_{-\theta})
		=\operatorname{ht}(\theta)-(-\operatorname{ht}(\theta))=2\operatorname{ht}(\theta)>0.
		\]
		\item 
		If $b_1\otimes b_2\in C(x_\theta\otimes r_{\alpha_i})$, let
		\(\Psi: B(\theta)\xrightarrow{\sim} C(x_\theta\otimes r_{\alpha_i})\)
		be the crystal isomorphism of \cite[Proposition~5.1]{BFKL06}.
		By the explicit description of $\Psi$ in \cite[(5.4), (5.5), (5.8), (5.9)]{BFKL06},
		every element in the image $\Psi(B(\theta))=C(x_\theta\otimes r_{\alpha_i})$
		has first tensor factor either $r_{\alpha_j}$ or $x_\gamma$ with
		$\gamma\in R^{+}$, and second tensor factor either $r_{\alpha_j}$ or
		$x_{-\beta}$ with $\beta\in R^{+}$.
		Equivalently,
		\[
		\overline{\mathrm{wt}}(b_1)\in R^{+}\cup\{0\},
		\qquad
		\overline{\mathrm{wt}}(b_2)\in R^{-}\cup\{0\}.
		\]
		Hence $h(b_1)\ge 0\ge h(b_2)$ on this component, and therefore 
		\(
		F(b_1\otimes b_2)=h(b_1)-h(b_2)\ge 0.
		\)
	
	\end{itemize}
	
	Thus $F(b_1\otimes b_2)\ge 0$ whenever $H(b_1\otimes b_2)=0$.
	
	\smallskip
	\noindent\emph{Case 3: $H(b_1\otimes b_2)=1$.}
	We claim that in this case one cannot have simultaneously
	$h(b_1)<0$ and $h(b_2)>0$.
	Indeed, $h(b_1)<0$ forces $b_1=x_{-\gamma}$ for some $\gamma\in R^+$, and
	$h(b_2)>0$ forces $b_2=x_{\beta}$ for some $\beta\in R^+$.
	In the notation of \cite[Proposition~6.1]{BFKL06}, the partial order $\le$ on $R^+\cup R^-$ is
	defined by $\alpha\le\beta$ if and only if $\beta-\alpha\in\sum_{i\neq 0}\mathbb{Z}_{\ge 0}\alpha_i$.
	Since $\beta-(-\gamma)=\beta+\gamma$ is a nonnegative $\mathbb{Z}$-linear combination of
	simple roots, we have $-\gamma\le \beta$.
	Therefore \cite[Proposition~6.1]{BFKL06} implies
	\[
	x_{-\gamma}\otimes x_{\beta}\ \in\ C(x_\theta\otimes x_\theta),
	\]
	the unique component with $H=2$, contradicting the assumption $H=1$.
	
	Consequently, on every $H=1$ component we must have $h(b_1)\ge 0$ or $h(b_2)\le 0$.
	Using \eqref{eq:hbound}, in either subcase we obtain $h(b_1)-h(b_2)\ \ge\ -\operatorname{ht}(\theta)$.

	Hence, by \eqref{eq:htdelta},
	\[
	F(b_1\otimes b_2)
	=\operatorname{ht}(\delta)+h(b_1)-h(b_2)
	\ \ge\ (\operatorname{ht}(\theta)+1)-\operatorname{ht}(\theta)=1>0.
	\]
	
	\smallskip
	Combining the three cases shows that $F(b_1\otimes b_2)\ge 0$ for all $b_1,b_2\in B$.
\end{proof}

\section{Expressions for the character formulas}\label{sec:exprecf}

\begin{definition}
	Let $C$ be a set of colors, and define the set of \textit{colored integers}
	\[
	\mathcal{Z}_C \;=\; \{\,k^c \mid k\in\mathbb{Z},\ c\in C\,\}.
	\]
	Let $\succ$ be an \emph{binary relation} on $\mathcal{Z}_C$. A \textit{generalized colored partition} (with respect to $\succ$) is a finite sequence
	\[
	(\lambda^{c_1}_1,\ \lambda^{c_2}_2,\ \dots,\ \lambda^{c_s}_s)\quad \text{of elements of }\mathcal{Z}_C,
	\]
	such that for all $i\in\{1,\dots,s-1\}$ we have $\lambda_i \succ \lambda_{i+1}$.
\end{definition}

For a generalized colored partition $\lambda=(\lambda^{c_1}_1,\dots,\lambda^{c_s}_s)$, let
\[
\mathrm{pr}:\mathcal{Z}_C\to\mathbb{Z},\qquad \mathrm{pr}(k^c)=k.
\]
The \textit{weight} of $\lambda$ is $|\lambda| \;=\; \sum_{i=1}^s \mathrm{pr}(\lambda^{c_i}_i)$, 
and the number of \textit{nonzero} parts is $\ell(\lambda) \;:=\; \#\{\,1\le i\le s \mid \mathrm{pr}(\lambda^{c_i}_i)\neq 0\,\}$.

\vskip 2mm

Fix a color $g\in C$. A \emph{grounded partition} with ground $g$ (and relation $\succ$)
is a nonempty generalized colored partition $\lambda=(\lambda^{c_1}_1,\dots,\lambda^{c_s}_s)\qquad (s\ge 1)$
such that $\lambda_s^{c_s}=0^g$ and, when $s\ge 2$, we have $\lambda_{s-1}\ne 0^g$.
Let $\mathcal{P}_g^\succ$ denote the set of all grounded partitions with ground $g$
and relation $\succ$.

\medskip

Let $B$ be the perfect crystal given in \eqref{eq:crys}. By \cite[Theorem~6.2]{BFKL06},
the energy function $H$ on $B\otimes B$ takes values in $\{0,1,2\}$.
We use $H$ to define a binary relation on colored integers as follows.
Take the color set to be $C=B$, and define a relation $\gg$ on $\mathcal Z_B$ by
\begin{equation}\label{eq:energy-condition}
	k^{\,b}\gg (k')^{\,b'}
	\ \Longleftrightarrow\
	k-k'\ge H(b'\otimes b),
\end{equation}
for $b,b'\in B$ and $k,k'\in\mathbb Z$.
Thus the admissibility condition~\eqref{eq:energy-condition} is completely determined by
the values of the energy function. The energy matrices for types
$D_4^{(3)}$, $G_2^{(1)}$, $E_6^{(2)}$ and $F_4^{(1)}$ are computed explicitly in
\cite{FHKS24,HJKL24}.

For any generalized colored partition
$\lambda=(\lambda^{b_1}_1,\dots,\lambda^{b_s}_s)$ with colors $b_1,\dots,b_s\in B$, define $C(\lambda):=e^{\sum_{k=1}^s\overline{\mathrm{wt}}(b_k)}$.

For $a\in\mathbb C[q]$, we use the $q$-Pochhammer symbol $(a;q)_\infty := \prod_{k=0}^{\infty} (1-aq^k)$.
For convenience, we also write $(x;y):= (q^x;q^y)_\infty$.

We then have the following expression of character formula.

\begin{theorem}[{\cite[Theorem~3.8]{DK20}}]\label{thm:coloredKMN}
	Let $l\in\mathbb Z_{>0}$ and let $B$ be a perfect crystal of level $l$ for
	$U_q(\widehat{\mathfrak g})$. Let $\mu\in \bar P_l^+$, and let $L(\mu)$ be the
	irreducible highest-weight $U_q(\widehat{\mathfrak g})$-module of highest weight $\mu$.
	Assume that the ground-state path is constant $p_\mu=\cdots\otimes g\otimes g\otimes g$
	for some $g\in B$. Set $q:=e^{-\delta/d_0}$. Then
	\begin{equation}\label{eq:coloredKMN}
		\sum_{\lambda\in\mathcal P_g^\gg} C(\lambda)\,q^{|\lambda|}
		= \frac{e^{-\mu}\,\mathrm{ch}\,L(\mu)}{(q;q)_\infty}.
	\end{equation}
\end{theorem}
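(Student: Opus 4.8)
The plan is to establish the identity by showing that both sides are generating functions over the same combinatorial object—paths in the crystal—and that the colored-partition bookkeeping on the left exactly matches the $(\mathrm{KMN})^2$ path parametrization on the right. The starting point is the $(\mathrm{KMN})^2$ character formula, which expresses $e^{-\mu}\,\mathrm{ch}\,L(\mu)$ as a weighted sum over the set of paths $p=(\cdots\otimes p_2\otimes p_1)$ in $\mathcal{B}(\mu)$ that eventually agree with the ground-state path $p_\mu=\cdots\otimes g\otimes g\otimes g$. Concretely, writing the energy (degree) of a path via the sum of local energies $H(p_{k+1}\otimes p_k)$ relative to the ground state, one has
\begin{equation*}
	e^{-\mu}\,\mathrm{ch}\,L(\mu)
	= \sum_{p} e^{\sum_k (\overline{\mathrm{wt}}(p_k)-\overline{\mathrm{wt}}(g))}\, q^{D(p)},
\end{equation*}
where $D(p)$ is the total energy/degree functional determined by $H$ and the specialization $q=e^{-\delta/d_0}$. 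My first step is to recall this formula precisely and to normalize the energy and degree conventions (using the assumed normalization $H(\phi\otimes\phi)=0$ when $g=\phi$, and more generally the constant-ground-state setup) so that the ground-state path contributes trivially.

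The heart of the argument is a weight-preserving bijection between paths and grounded partitions. Given a path $p$ that stabilizes to $g$, I would read off the sequence of crystal elements $p_1,p_2,\dots$ and assign to the $k$-th tensor factor a colored part $\lambda_k^{p_k}$, where the integer $\lambda_k$ records the accumulated energy contribution up to position $k$. The stabilization of $p$ to the constant ground $g$ translates into the grounding condition $\lambda_s^{c_s}=0^g$ with $\lambda_{s-1}\neq 0^g$, so that grounded partitions in $\mathcal P_g^\gg$ correspond exactly to non-ground-state paths together with their trivial tail. The energy relation between adjacent factors, $H(p_{k+1}\otimes p_k)$, is precisely what the binary relation $\gg$ of~\eqref{eq:energy-condition} encodes: the inequality $\lambda_k-\lambda_{k+1}\ge H(p_{k+1}\otimes p_k)$ is the local admissibility condition guaranteeing that partial sums of local energies assemble into a genuine (weakly ordered) integer sequence. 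Under this correspondence, $C(\lambda)=e^{\sum_k \overline{\mathrm{wt}}(p_k)}$ matches the classical weight factor, and $q^{|\lambda|}$ matches $q^{D(p)}$ after the degree functional is rewritten as $\sum_k \mathrm{pr}(\lambda_k^{c_k})$.

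The remaining step is to verify that the two weightings agree termwise after dividing by $(q;q)_\infty$. The factor $(q;q)_\infty^{-1}=\prod_{k\ge1}(1-q^k)^{-1}$ accounts for the freedom to insert arbitrarily many ground parts $0^g$ (equivalently, to shift the whole partition by a constant), which is exactly the discrepancy between counting paths and counting grounded partitions; alternatively it reflects the standard relation between the normalized character and the principally-specialized denominator. I would make this precise by showing that summing $C(\lambda)q^{|\lambda|}$ over $\mathcal P_g^\gg$, where the minimal part is pinned at $0^g$, reproduces the path sum up to this overall geometric-series factor. \textbf{The main obstacle} I anticipate is bookkeeping the exact normalization: matching the energy function's values (shifted so the ground state has degree zero) against the integer labels $\lambda_k$, and confirming that the relation $\gg$ defined via $k-k'\ge H(b'\otimes b)$ is consistent with the orientation of the tensor factors in the $(\mathrm{KMN})^2$ formula. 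Since this theorem is quoted directly from \cite[Theorem~3.8]{DK20}, I would ultimately appeal to that reference for the detailed combinatorial bijection, while recording here the dictionary—paths to grounded partitions, local energy to the relation $\gg$, and the $(q;q)_\infty^{-1}$ factor to ground-part insertion—that makes the correspondence transparent in our setting.
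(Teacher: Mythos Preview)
The paper does not prove this statement; it is quoted from \cite[Theorem~3.8]{DK20} and used as a black box. Your outline is essentially the Dousse--Konan argument: the $(\mathrm{KMN})^2$ path realization of $\mathcal{B}(\mu)$, the assignment of accumulated local energies $\lambda_k := \sum_{j\ge k} H(p_{j+1}\otimes p_j)$ to a path, and the identification of the energy inequality with the relation $\gg$. In that sense your approach matches what the paper (via its citation) relies on.

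One point is genuinely wrong, though, and it is precisely the step you flagged as the main obstacle. Your explanation of the $(q;q)_\infty^{-1}$ factor does not work: you cannot ``insert arbitrarily many ground parts $0^g$,'' since the grounding condition forces a unique $0^g$ at the tail and forbids $\lambda_{s-1}^{b_{s-1}}=0^g$; nor can you ``shift the whole partition by a constant,'' since the last part is pinned at $0$. The correct mechanism is the slack in the difference inequalities. A path determines the \emph{minimal} admissible sequence $\lambda_k^{\min}=\sum_{j\ge k}H(p_{j+1}\otimes p_j)$, for which every $\gg$-condition holds with equality; a general grounded partition with the same underlying path differs from this by $\mu_k:=\lambda_k-\lambda_k^{\min}$, which is weakly decreasing, nonnegative, and eventually zero---i.e.\ an ordinary partition. (When $\mu$ has more nonzero parts than the length of the path's nontrivial segment, one extends the color sequence by additional $g$-colored parts with \emph{positive} values, not by $0^g$'s.) Summing freely over this ordinary partition $\mu$ contributes exactly $(q;q)_\infty^{-1}$, and the remaining sum over paths is the $(\mathrm{KMN})^2$ expression for $e^{-\mu}\,\mathrm{ch}\,L(\mu)$. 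With this correction your sketch goes through.
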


We will only need the level-one case $\mu=\Lambda_0$ in the exceptional affine types
considered in this paper. By Definition~\ref{defn:pc} and the structure
\eqref{eq:crys}, one has $b_{\Lambda_0}=b^{\Lambda_0}=\phi$, hence the ground-state path
is constant $p_{\Lambda_0}=\cdots\otimes \phi\otimes \phi\otimes \phi$.
Accordingly, we write $\mathcal P_\phi^\gg$ for the set of grounded partitions with
ground $\phi$ satisfying the relation $\gg$ in \eqref{eq:energy-condition}.

\begin{lemma}\label{lem:lambda-penultimate-positive}
	Let $\lambda=(\lambda^{b_1}_1,\dots,\lambda^{b_s}_s)\in\mathcal P_\phi^\gg$. Then
	$\lambda_{s-1}\ge 1$.
\end{lemma}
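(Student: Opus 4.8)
The plan is to read off from the relation $\gg$ the single inequality that constrains the penultimate part, and then to bound the relevant value of the energy function. First I would dispose of the trivial case $s=1$, where there is no part $\lambda_{s-1}$ and the statement is vacuous, so assume $s\ge 2$. By the definition of a grounded partition we have $\lambda_s^{b_s}=0^\phi$ and $\lambda_{s-1}^{b_{s-1}}\neq 0^\phi$, and the defining relation $\lambda_{s-1}\succ\lambda_s$ reads $\lambda_{s-1}^{b_{s-1}}\gg 0^\phi$. Unwinding \eqref{eq:energy-condition} with $k=\lambda_{s-1}$, $b=b_{s-1}$, $k'=0$, $b'=\phi$, this is exactly
\[
\lambda_{s-1}\ \ge\ H(\phi\otimes b_{s-1}).
\]
Thus everything reduces to understanding $H(\phi\otimes b_{s-1})$, and I split into the cases $b_{s-1}\neq\phi$ and $b_{s-1}=\phi$.

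The key step is the claim that $H(\phi\otimes b)\ge 1$ for every $b\in B$ with $b\neq\phi$, and I would prove it by locating the classical connected component (obtained by deleting the $0$-arrows) of $\phi\otimes b$. By the Benkart--Frenkel--Kang--Lee classification recalled in the proof of Lemma~\ref{lem:Fgeq0}, the components on which $H$ vanishes are precisely the singletons $C(\phi\otimes\phi)$ and $C(x_\theta\otimes x_{-\theta})$ together with the components $C(x_\theta\otimes r_{\alpha_i})\cong B(\theta)$; on each of the latter two families the first tensor factor is never $\phi$ (for $C(x_\theta\otimes r_{\alpha_i})$ this is the explicit description of $\Psi(B(\theta))$ from \cite[Proposition~5.1]{BFKL06} used in Lemma~\ref{lem:Fgeq0}, where the first factor is always $r_{\alpha_j}$ or $x_\gamma$ with $\gamma\in R^{+}$). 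Hence the only tensor of the form $\phi\otimes b$ lying in an $H=0$ component is $\phi\otimes\phi$ itself, and every other $\phi\otimes b$ sits in a component with $H\in\{1,2\}$, giving $H(\phi\otimes b)\ge 1$. The one point requiring care here---the main obstacle---is exactly this verification that no $\phi\otimes b$ with $b\neq\phi$ can fall into an $H=0$ component; it rests entirely on the BFKL description of the $H=0$ components and on the fact that their first tensor factors are always distinct from $\phi$.

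Granting the claim, the case $b_{s-1}\neq\phi$ yields $\lambda_{s-1}\ge H(\phi\otimes b_{s-1})\ge 1$ at once. In the remaining case $b_{s-1}=\phi$ the normalisation \eqref{eq:norm} gives $H(\phi\otimes\phi)=0$, so the relation only forces $\lambda_{s-1}\ge 0$; but $\lambda_{s-1}^{b_{s-1}}=\lambda_{s-1}^{\phi}\neq 0^\phi$ with matching colors forces $\lambda_{s-1}\neq 0$, and combining these two facts again gives $\lambda_{s-1}\ge 1$. As an independent sanity check for the positive-root colors, one can also read off $H(\phi\otimes b)\ge 1$ directly from $F(\phi\otimes b)\ge 0$ (Lemma~\ref{lem:Fgeq0}): since $\overline{\mathrm{wt}}(\phi)=0$, the inequality $F(\phi\otimes b)\ge 0$ reads $\operatorname{ht}(\delta)\,H(\phi\otimes b)\ge \operatorname{ht}(\overline{\mathrm{wt}}(b))$, which already yields $H(\phi\otimes b)\ge 1$ whenever $\overline{\mathrm{wt}}(b)$ is a positive root.
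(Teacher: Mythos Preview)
Your proof is correct and follows essentially the same route as the paper: both reduce to the inequality $\lambda_{s-1}\ge H(\phi\otimes b_{s-1})$ from the relation $\gg$, use groundedness to exclude $\lambda_{s-1}^{b_{s-1}}=0^\phi$, and invoke the fact that $H(\phi\otimes b)\ge 1$ for $b\neq\phi$. The only difference is cosmetic: the paper argues by contradiction and cites \cite[Table~6.1, (6.2)]{BFKL06} directly for $H(\phi\otimes b)=1$ when $b\in B(\theta)$, whereas you give a direct case split and rederive the weaker bound $H(\phi\otimes b)\ge 1$ from the classification of $H=0$ components already recalled in Lemma~\ref{lem:Fgeq0}.
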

\begin{proof}
	If $\lambda_{s-1}=0$, then $b_{s-1}\ne\phi$ by groundedness of $\lambda$.
	Applying \eqref{eq:energy-condition} to the last step gives
	\[
	0=\lambda_{s-1}-\lambda_s \ge H(b_s\otimes b_{s-1})
	=H(\phi\otimes b_{s-1}).
	\]
	By \cite[Table~6.1, (6.2)]{BFKL06}, one has $H(\phi\otimes b)=1$ for all $b\in B(\theta)$,
	a contradiction. Hence $\lambda_{s-1}\ne 0$, i.e.\ $\lambda_{s-1}\ge 1$.
\end{proof}

The specialization of the character formula for  $L(\mu)$ can
also be determined from the structure of the root system, as follows.
For a sequence $\mathbf s=(s_0,s_1,\ldots,s_n)$ of nonnegative integers, define the
$\mathbf s$-\emph{specialization}
\[
\mathrm{F}_{\mathbf s}\colon
\mathbb Z[[e^{-\alpha_0},e^{-\alpha_1},\ldots,e^{-\alpha_n}]]
\to \mathbb Z[[q]],
\qquad
e^{-\alpha_i}\mapsto q^{s_i}.
\]
The special case $\mathbf s=(1,1,\ldots,1)$ is called the \emph{principal specialization}
and is denoted by $\mathrm{F}_1$. For a root system $\Phi$, consider
\[
D(\Phi):=\prod_{\alpha\in\Phi^+}\left(1-e^{-\alpha}\right)^{\dim \mathfrak g_{-\alpha}}.
\]
Then the principal specialization of the normalized character has the following product
expansion.

\begin{theorem}
[{\cite[Proposition~10.9]{Kac90}}]	
\label{thm:Lepowsky}
	\[
	\mathrm{F}_1\!\left(e^{-\mu}\,\mathrm{ch}\,L(\mu)\right)
	=
	\frac{\mathrm{F}_{(\mu(h_0)+1,\ldots,\mu(h_n)+1)}\, D(\Phi^\vee)}
	{\mathrm{F}_1\, D(\Phi^\vee)}.
	\]
\end{theorem}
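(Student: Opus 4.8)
The plan is to derive the identity directly from the Weyl--Kac character formula together with the denominator identity, exploiting the fact that the principal specialization attached to $\Phi$ becomes, after passing through the natural pairing, a specialization attached to the \emph{dual} root system $\Phi^\vee$. First I would invoke the Weyl--Kac character formula in its normalized form
\begin{equation*}
	e^{-\mu}\,\ch L(\mu)=\frac{\sum_{w\in W}(-1)^{\ell(w)}\,e^{\,w(\mu+\rho)-(\mu+\rho)}}{\sum_{w\in W}(-1)^{\ell(w)}\,e^{\,w\rho-\rho}},
\end{equation*}
where $W$ is the affine Weyl group and $\rho$ is a Weyl vector ($\langle h_i,\rho\rangle=1$ for all $i\in I$). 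By the Weyl--Kac denominator identity the denominator equals $D(\Phi)$, so it suffices to apply $\mathrm F_1$ separately to the numerator $N:=\sum_{w\in W}(-1)^{\ell(w)}e^{\,w(\mu+\rho)-(\mu+\rho)}$ and to $D(\Phi)$.

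The key step is to reinterpret every $\mathbf s$-specialization as an evaluation against a coweight. For $\lambda$ in the root lattice one has $\mathrm F_{\mathbf s}(e^{-\lambda})=q^{\langle\lambda,\rho^\vee_{\mathbf s}\rangle}$, where $\rho^\vee_{\mathbf s}\in\mathfrak h$ is characterized by $\langle\alpha_i,\rho^\vee_{\mathbf s}\rangle=s_i$; for $\mathbf s=(1,\dots,1)$ this is the principal coweight $\rho^\vee$ (so that $\mathrm F_1(e^{-\lambda})=q^{\operatorname{ht}(\lambda)}$). Through the perfect pairing this converts the $W$-alternating sums over the root lattice of $\mathfrak g$ into $W$-alternating sums evaluated against coweights, i.e.\ into quantities governed by $\Phi^\vee$. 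The matching ingredient on the dual side is the denominator identity for $\Phi^\vee$,
\begin{equation*}
	D(\Phi^\vee)=\sum_{w\in W}(-1)^{\ell(w)}\,e^{\,w\rho^\vee-\rho^\vee},
\end{equation*}
together with $\mathrm F_{\mathbf s}(e^{-\beta^\vee})=q^{\langle\beta^\vee,\,\rho_{\mathbf s}\rangle}$, where now $\rho_{\mathbf s}\in\mathfrak h^*$ satisfies $\langle\alpha_i^\vee,\rho_{\mathbf s}\rangle=s_i$.

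With these two dictionaries in place, I would apply $\mathrm F_1$ to $N$ and to $D(\Phi)$, use the $W$-invariance of the pairing ($\langle w\lambda,h\rangle=\langle\lambda,w^{-1}h\rangle$) and the substitution $w\mapsto w^{-1}$ (which preserves $(-1)^{\ell(w)}$) to bring each specialization into the common shape $\sum_{w}(-1)^{\ell(w)}q^{\langle\,\ast,\;\rho^\vee-w\rho^\vee\rangle}$. Comparing term by term with the specialized dual denominator then yields $\mathrm F_1 D(\Phi)=\mathrm F_1 D(\Phi^\vee)$ for the denominator, and, writing $s_i'=\mu(h_i)+1=\langle\alpha_i^\vee,\mu+\rho\rangle$ so that $\rho_{\mathbf s'}=\mu+\rho$ and $\mathrm F_{\mathbf s'}(e^{-\beta^\vee})=q^{\langle\beta^\vee,\mu+\rho\rangle}$, the identification $\mathrm F_1 N=\mathrm F_{(\mu(h_0)+1,\dots,\mu(h_n)+1)}\,D(\Phi^\vee)$. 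Dividing the two gives the asserted formula.

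The main obstacle I anticipate is the duality bookkeeping rather than any single calculation: one must verify that the principal specialization relative to $\Phi$ genuinely coincides with the corresponding specialization relative to $\Phi^\vee$, which rests on the reciprocity $\langle\alpha_i,\rho^\vee\rangle=1=\langle\alpha_i^\vee,\rho\rangle$ and on the denominator identity holding for $\Phi^\vee$ with the correct real- and imaginary-root multiplicities. For the twisted types among those under consideration this requires knowing that $\Phi^\vee$ is again an affine root system with matching imaginary-root data; these facts are standard and recorded in \cite{Kac90}.
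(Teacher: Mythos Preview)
The paper does not supply its own proof of this statement: it is quoted as \cite[Proposition~10.9]{Kac90} and used as a black box. So there is no in-paper argument to compare your proposal against.

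That said, your outline is the standard derivation (essentially Kac's own in Chapter~10): write $e^{-\mu}\ch L(\mu)$ as the ratio of two $W$-alternating sums via Weyl--Kac, interpret $\mathrm F_1$ as evaluation at the principal coweight $\rho^\vee$, use $W$-invariance of the pairing and $w\mapsto w^{-1}$ to rewrite each specialized sum in the form $\sum_w(-1)^{\ell(w)}q^{\langle\lambda,\rho^\vee-w\rho^\vee\rangle}$, and then recognize these as $\mathrm F_{\mathbf s}$-specializations of the dual denominator $D(\Phi^\vee)$ with $\mathbf s=(1,\dots,1)$ for the denominator and $s_i=\langle h_i,\mu+\rho\rangle=\mu(h_i)+1$ for the numerator. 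This is correct, and your caution about the twisted cases (that $\Phi^\vee$ is again affine with the expected imaginary-root multiplicities, so the dual denominator identity applies) is exactly the point one has to check; the relevant facts are indeed in \cite{Kac90}.
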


\section{The level-one specialization}\label{sec:los}

Throughout this section, we collect the root-theoretic input needed to apply
Theorem~\ref{thm:Lepowsky} to the level-one weight $\Lambda_0$.
Let $\{\alpha_1,\dots,\alpha_n\}$ be the simple roots of the finite root system
$\Phi^{0}$ obtained by removing the $0$-node from the affine Dynkin diagram.
For $(a_1,\dots,a_n)\in\mathbb Z_{\ge 0}^n$, set
\[
(a_1\cdots a_n):=\sum_{i=1}^n a_i\,\alpha_i,
\qquad
\overline{(a_1\cdots a_n)}:=-\sum_{i=1}^n a_i\,\alpha_i,
\]
and, for affine roots, similarly set $(a_0a_1\cdots a_n):=\sum_{i=0}^n a_i\,\alpha_i$.

With this convention, Tables~\ref{tab:stacked-positive-roots} and~\ref{tab:affinepositive}
list the finite positive roots and the corresponding descriptions of $\Phi_+$
(including $\delta$ and the multiplicities of imaginary roots) for all exceptional
affine types considered in this paper.


\newcolumntype{L}[1]{>{\raggedright\arraybackslash}m{#1}}

\setlength{\tabcolsep}{5pt}
\renewcommand{\arraystretch}{1.12}

\begin{longtable}{|p{.05\textwidth}|L{.32\textwidth}|L{.58\textwidth}|}
	\caption{Finite positive roots}
	\label{tab:stacked-positive-roots}\\
	\hline
	Type & $\Phi^{0}_{l,+}$ (Positive long roots) & $\Phi^{0}_{s,+}$ (Positive short roots) \\ \hline
	\endfirsthead
	\hline
	Type & \multicolumn{2}{|L{.92\textwidth}}{$\Phi^{0}_{+}$ (All positive roots)}\\ \hline
	\endhead
	\hline
	\endfoot
	
	$G_2^{(1)}$ &
	{\footnotesize\begin{tabular}[t]{@{}l@{}}(10)\ \ (13)\ \ (23)\end{tabular}} &
	{\footnotesize\begin{tabular}[t]{@{}l@{}}(01)\ \ (11)\ \ (12)\end{tabular}} \\
	\hline
	
	$D_4^{(3)}$ &
	{\footnotesize\begin{tabular}[t]{@{}l@{}}(01)\ \ (31)\ \ (32)\end{tabular}} &
	{\footnotesize\begin{tabular}[t]{@{}l@{}}(10)\ \ (11)\ \ (21)\end{tabular}} \\
	\hline
	
	$F_4^{(1)}$ &
	{\footnotesize\begin{tabular}[t]{@{}l@{}}
			(1000)\ \ (0100)\ \ (1100)\ \ (0120) \\
			(1120)\ \ (1220)\ \ (0122)\ \ (1122) \\
			(1222)\ \ (1242)\ \ (1342)\ \ (2342)
	\end{tabular}} &
	{\footnotesize\begin{tabular}[t]{@{}l@{}}
			(0010)\ \ (0110)\ \ (1110)\ \ (1232) \\
			(0001)\ \ (0011)\ \ (0111)\ \ (0121) \\
			(1111)\ \ (1121)\ \ (1221)\ \ (1231)
	\end{tabular}} \\
	\hline
	
	$E_6^{(2)}$ &
	{\footnotesize\begin{tabular}[t]{@{}l@{}}
			(0001)\ \ (0010)\ \ (0011)\ \ (0210) \\
			(0211)\ \ (0221)\ \ (2210)\ \ (2211) \\
			(2221)\ \ (2421)\ \ (2431)\ \ (2432)
	\end{tabular}} &
	{\footnotesize\begin{tabular}[t]{@{}l@{}}
			(0100)\ \ (0110)\ \ (0111)\ \ (2321) \\
			(1000)\ \ (1100)\ \ (1110)\ \ (1210) \\
			(1111)\ \ (1211)\ \ (1221)\ \ (1321)
	\end{tabular}} \\
	\hline
	
	Type & \multicolumn{2}{L{.92\textwidth}|}{$\Phi^{0}_{+}$ (All positive roots)}\\ \hline
	
	$E_6^{(1)}$ & \multicolumn{2}{L{.92\textwidth}|}{
		\footnotesize\begin{tabular}{@{}*{8}{l}@{}}
		(100000) & (010000) & (001000) & (000100) & (000010) & (000001) & (101000) & (010100) \\
		(001100) & (000110) & (000011) & (101100) & (011100) & (010110) & (001110) & (000111) \\
		(111100) & (101110) & (011110) & (010111) & (001111) & (111110) & (101111) & (011210) \\
		(011111) & (111210) & (111111) & (011211) & (112210) & (111211) & (011221) & (112211) \\
		(111221) & (112221) & (112321) & (122321) \\
		\end{tabular}
	}\\ \hline
	
	$E_7^{(1)}$ & \multicolumn{2}{L{.92\textwidth}|}{
		\footnotesize\begin{tabular}{@{}*{8}{l}@{}}
	(1000000) & (0100000) & (0010000) & (0001000) & (0000100) & (0000010) & (0000001) & (1010000) \\
	(0101000) & (0011000) & (0001100) & (0000110) & (0000011) & (1011000) & (0111000) & (0101100) \\
	(0011100) & (0001110) & (0000111) & (1111000) & (1011100) & (0111100) & (0101110) & (0011110) \\
	(0001111) & (1111100) & (1011110) & (0112100) & (0111110) & (0101111) & (0011111) & (1112100) \\
	(1111110) & (1011111) & (0112110) & (0111111) & (1122100) & (1112110) & (1111111) & (0112210) \\
	(0112111) & (1122110) & (1112210) & (1112111) & (0112211) & (1122210) & (1122111) & (1112211) \\
	(0112221) & (1123210) & (1122211) & (1112221) & (1223210) & (1123211) & (1122221) & (1223211) \\
	(1123221) & (1223221) & (1123321) & (1223321) & (1224321) & (1234321) & (2234321) \\
		\end{tabular}
	}\\ \hline
	
	$E_8^{(1)}$ & \multicolumn{2}{L{.92\textwidth}|}{
		\footnotesize\begin{tabular}{@{}*{8}{l}@{}}
		(10000000) & (01000000) & (00100000) & (00010000) & (00001000) & (00000100) & (00000010) & (00000001) \\
		(10100000) & (01010000) & (00110000) & (00011000) & (00001100) & (00000110) & (00000011) & (10110000) \\
		(01110000) & (01011000) & (00111000) & (00011100) & (00001110) & (00000111) & (11110000) & (10111000) \\
		(01111000) & (01011100) & (00111100) & (00011110) & (00001111) & (11111000) & (10111100) & (01121000) \\
		(01111100) & (01011110) & (00111110) & (00011111) & (11121000) & (11111100) & (10111110) & (01121100) \\
		(01111110) & (01011111) & (00111111) & (11221000) & (11121100) & (11111110) & (10111111) & (01122100) \\
		(01121110) & (01111111) & (11221100) & (11122100) & (11121110) & (11111111) & (01122110) & (01121111) \\
		(11222100) & (11221110) & (11122110) & (11121111) & (01122210) & (01122111) & (11232100) & (11222110) \\
		(11221111) & (11122210) & (11122111) & (01122211) & (12232100) & (11232110) & (11222210) & (11222111) \\
		(11122211) & (01122221) & (12232110) & (11232210) & (11232111) & (11222211) & (11122221) & (12232210) \\
		(12232111) & (11233210) & (11232211) & (11222221) & (12233210) & (12232211) & (11233211) & (11232221) \\
		(12243210) & (12233211) & (12232221) & (11233221) & (12343210) & (12243211) & (12233221) & (11233321) \\
		(22343210) & (12343211) & (12243221) & (12233321) & (22343211) & (12343221) & (12243321) & (22343221) \\
		(12343321) & (12244321) & (22343321) & (12344321) & (22344321) & (12354321) & (22354321) & (13354321) \\
		(23354321) & (22454321) & (23454321) & (23464321) & (23465321) & (23465421) & (23465431) & (23465432) \\	
		\end{tabular}
	}\\ \hline
	
\end{longtable}

For types $D_4^{(3)}$ and $E_6^{(2)}$, the set of positive roots is 
\begin{align*}
	\Phi_{+}=\Phi_{+}^{re}\cup \Phi_+^{im}=&\{\alpha+r\delta\mid \alpha\in \Phi_{s,+}^{0},\ r\geq 0\ \text{or}\ \alpha\in\Phi_{s,-}^{0},\ r>0\}\\
	&\cup\{\alpha+rk\delta\mid \alpha\in \Phi_{l,+}^{0},\ r\geq 0\ \text{or}\ \alpha\in\Phi_{l,-}^{0},\ r>0\} \cup\{r\delta\mid r>0\},
\end{align*}
where $k=3$ for $D_4^{(3)}$ and $k=2$ for $E_6^{(2)}$.

\vskip 4mm

For types $G_2^{(1)}$, $F_4^{(1)}$, $E_6^{(1)}$, $E_7^{(1)}$ and $E_8^{(1)}$, the set of positive roots is 
\begin{align*}
	\Phi_{+}=\Phi_{+}^{re}\cup \Phi_+^{im}
	&=\{\alpha+r\delta\mid \alpha\in \Phi_{+}^0,\ r\geq 0 \ \text{or}\ \alpha\in \Phi_{-}^0,\ r\geq 1\} 
	\cup \{r\delta\mid r>0\}.
\end{align*}

More precisely, the set $\Phi_+$ of each type is given by the following table

\setlength{\tabcolsep}{4pt}
\renewcommand{\arraystretch}{1.12}

\begin{longtable}{|l|P{0.11\textwidth}|P{0.45\textwidth}|P{0.3\textwidth}@{}|}
\caption{Null roots and affine positive roots}\label{tab:affinepositive}\\		
	\hline
	Type & Null root $\delta$ & Real roots & Imaginary roots  \\
	\hline
	\endfirsthead
	\hline
	Type & Null root $\delta$ & Real roots & Imaginary roots  \\
	\hline
	\endhead
	\hline
	\endfoot

$G_2^{(1)}$ &
\((123)\) &
\begin{tabular}[t]{@{}l@{}}
	\(\{\alpha+r\delta \mid \alpha\in \Phi^{0}_{+},\ r\ge 0\ \text{or}\ \alpha\in \Phi^{0}_{-},\ r\ge 1\}\)
\end{tabular}
&
\begin{tabular}[t]{@{}l@{}}
	\(\{k\delta\mid k\ge 1\}\) (mult.\ 2)
\end{tabular}
\\ \hline

	$D_4^{(3)}$ &
	\((121)\) &
	\begin{tabular}[t]{@{}l@{}}
		\(\{\alpha+r\delta \mid \alpha\in \Phi^{0}_{s,+},\ r\ge 0\ \text{or}\ \alpha\in \Phi^{0}_{s,-},\ r\ge 1\}\)\hfill\(\cup\)\\
		\(\{\alpha+3r\delta \mid \alpha\in \Phi^{0}_{l,+},\ r\ge 0\ \text{or}\ \alpha\in \Phi^{0}_{l,-},\ r\ge 1\}\)
	\end{tabular}
	&
	\begin{tabular}[t]{@{}l@{}}
		\(\{3k\delta\mid k\ge 1\}\) (mult.\ 2)\ \(\cup\)\\
		\(\{(3k+1)\delta\mid k\ge 0\}\) (mult.\ 1)\ \(\cup\)\\
		\(\{(3k+2)\delta\mid k\ge 0\}\) (mult.\ 1)
	\end{tabular}
	\\ \hline
	
	$E_6^{(2)}$ &
	\((12321)\) &
	\begin{tabular}[t]{@{}l@{}}
		\(\{\alpha+r\delta \mid \alpha\in \Phi^{0}_{s,+},\ r\ge 0\ \text{or}\ \alpha\in \Phi^{0}_{s,-},\ r\ge 1\}\)\hfill\(\cup\)\\
		\(\{\alpha+2r\delta \mid \alpha\in \Phi^{0}_{l,+},\ r\ge 0\ \text{or}\ \alpha\in \Phi^{0}_{l,-},\ r\ge 1\}\)
	\end{tabular}
	&
	\begin{tabular}[t]{@{}l@{}}
		\(\{2k\delta\mid k\ge 1\}\) (mult.\ 4)\ \(\cup\)\\
		\(\{(2k+1)\delta\mid k\ge 0\}\) (mult.\ 2)
	\end{tabular}
	\\ \hline

	$F_4^{(1)}$ &
	\((12342)\) &
	\begin{tabular}[t]{@{}l@{}}
		\(\{\alpha+r\delta \mid \alpha\in \Phi^{0}_{+},\ r\ge 0\ \text{or}\ \alpha\in \Phi^{0}_{-},\ r\ge 1\}\)
	\end{tabular}
	&
	\begin{tabular}[t]{@{}l@{}}
		\(\{k\delta\mid k\ge 1\}\) (mult.\ 4)
	\end{tabular}
	\\ \hline
	
	$E_6^{(1)}$ &
	\((1122321)\) &
	\begin{tabular}[t]{@{}l@{}}
		\(\{\alpha+r\delta \mid \alpha\in \Phi^{0}_{+},\ r\ge 0\ \text{or}\ \alpha\in \Phi^{0}_{-},\ r\ge 1\}\)
	\end{tabular}
	&
	\begin{tabular}[t]{@{}l@{}}
		\(\{k\delta\mid k\geq 1\}\) (mult.\ 6)
	\end{tabular}
	\\ \hline
	
	$E_7^{(1)}$ &
	\((12234321)\) &
	\begin{tabular}[t]{@{}l@{}}
		\(\{\alpha+r\delta \mid \alpha\in \Phi^{0}_{+},\ r\ge 0\ \text{or}\ \alpha\in \Phi^{0}_{-},\ r\ge 1\}\)
	\end{tabular}
	&
	\begin{tabular}[t]{@{}l@{}}
		\(\{k\delta\mid k\geq 1\}\) (mult.\ 7)
	\end{tabular}
	\\ \hline
	
	$E_8^{(1)}$ &
	\((123465432)\) &
	\begin{tabular}[t]{@{}l@{}}
		\(\{\alpha+r\delta \mid \alpha\in \Phi^{0}_{+},\ r\ge 0\ \text{or}\ \alpha\in \Phi^{0}_{-},\ r\ge 1\}\)
	\end{tabular}
	&
	\begin{tabular}[t]{@{}l@{}}
		\(\{k\delta\mid k\geq 1\}\) (mult.\ 8)
	\end{tabular}
	\\ \hline
	
\end{longtable}

For any $\beta=\sum_{i=0}^{n}m_i\alpha_i$ and $\mathbf s=(s_0,s_1,\cdots,s_n)$, we denote $\mathrm{ht}_{\mathbf s}(\beta)=\sum_{i=0}^ns_im_i$. Then we have $\mathrm{ht}=\mathrm{ht}_{(1,\cdots,1)}$.

The Langlands duals of $G_2^{(1)}$ and $F_4^{(1)}$ are $D_4^{(3)}$ and $E_6^{(2)}$, respectively. Then the specialization $\mathrm{F}_{\mathbf s}D(\Phi^\vee)$ is given by
\begin{align*}
	&\mathrm F_{\mathbf s}\!\left(\prod_{\alpha\in\Phi^{+}}
	\bigl(1-e^{-\alpha}\bigr)^{\dim\mathfrak g_{-\alpha}}\right)\\
	=&\prod_{\alpha\in\Phi_{s,+}^0}\prod_{r\ge 0}\!\!\left(1-q^{\operatorname{ht}_{\mathbf s}(\alpha)+r \operatorname{ht}_{\mathbf s}(\delta)}\right)
	\prod_{\alpha\in\Phi_{s,+}^0}\prod_{r\ge 1}\!\!\left(1-q^{-\operatorname{ht}_{\mathbf s}(\alpha)+r \operatorname{ht}_{\mathbf s}(\delta)}\right)\\
	&\times
	\prod_{\alpha\in\Phi_{l,+}^0}\prod_{r\ge 0}\!\!\left(1-q^{\operatorname{ht}_{\mathbf s}(\alpha)+rk\operatorname{ht}_{\mathbf s}(\delta)}\right)
	\prod_{\alpha\in\Phi_{l,+}^0}\prod_{r\ge 1}\!\!\left(1-q^{-\operatorname{ht}_{\mathbf s}(\alpha)+rk\operatorname{ht}_{\mathbf s}(\delta)}\right) 
	\prod_{\alpha\in\Phi^{im}_{+}}\!\!\left(1-q^{\operatorname{ht}_{\mathbf s}(\alpha)}\right)^{{\rm mult}(\alpha)}
	\\
	=&\prod_{\alpha\in\Phi_{s,+}^0}\prod_{r\ge 0}\!\!\left(1-q^{\operatorname{ht}_{\mathbf s}(\alpha)+r \operatorname{ht}_{\mathbf s}(\delta)}\right)
	\prod_{\alpha\in\Phi_{s,+}^0}\prod_{r\ge 0}\!\!\left(1-q^{\operatorname{ht}_{\mathbf s}(\delta)-\operatorname{ht}_{\mathbf s}(\alpha)+r \operatorname{ht}_{\mathbf s}(\delta)}\right)\\
	&\times
	\prod_{\alpha\in\Phi_{l,+}^0}\prod_{r\ge 0}\!\!\left(1-q^{\operatorname{ht}_{\mathbf s}(\alpha)+rk\operatorname{ht}_{\mathbf s}(\delta)}\right)
	\prod_{\alpha\in\Phi_{l,+}^0}\prod_{r\ge 0}\!\!\left(1-q^{k\operatorname{ht}_{\mathbf s}(\delta)-\operatorname{ht}_{\mathbf s}(\alpha)+rk\operatorname{ht}_{\mathbf s}(\delta)}\right) 
	\prod_{\alpha\in\Phi^{im}_{+}}\!\!\left(1-q^{\operatorname{ht}_{\mathbf s}(\alpha)}\right)^{{\rm mult}(\alpha)}
	\\
\end{align*}

The Langlands duals of $D_4^{(3)}$ and $E_6^{(2)}$ are $G_2^{(1)}$ and $F_4^{(1)}$, respectively,
while $E_6^{(1)}$, $E_7^{(1)}$ and $E_8^{(1)}$ are self-dual. Then the specialization $\mathrm{F}_{\mathbf s}D(\Phi^\vee)$ is given by
\begin{align*}	
	&\mathrm F_{\mathbf s}\!\Bigl(\,\prod_{\alpha\in\Phi^{+}}
	\bigl(1-e^{-\alpha}\bigr)^{\dim\mathfrak g_{-\alpha}}\Bigr)\\
	&=\prod_{\alpha\in\Phi_+^0}\prod_{r\geq 0}\!\left(1-q^{\operatorname{ht}_{\mathbf s}(\alpha)+r \operatorname{ht}_{\mathbf s}(\delta)}\right)
	\prod_{\alpha\in\Phi_+^0}\prod_{r\geq 1}\!\left(1-q^{-\operatorname{ht}_{\mathbf s}(\alpha)+r \operatorname{ht}_{\mathbf s}(\delta)}\right)
	\prod_{r\geq 1}\!\left(1-q^{r \operatorname{ht}_{\mathbf s}(\delta)}\right)^{{\rm mult}(r\delta)}\\
	&=\prod_{\alpha\in\Phi_+^0}\prod_{r\geq 0}\!\left(1-q^{\operatorname{ht}_{\mathbf s}(\alpha)+r\operatorname{ht}_{\mathbf s}(\delta)}\right)
	\prod_{\alpha\in\Phi_+^0}\prod_{r\geq 0}\!\left(1-q^{\operatorname{ht}_{\mathbf s}(\delta)-\operatorname{ht}_{\mathbf s}(\alpha)+r\operatorname{ht}_{\mathbf s}(\delta)}\right)
	\prod_{r\geq 1}\!\left(1-q^{r\operatorname{ht}_{\mathbf s}(\delta)}\right)^{{\rm mult}(r\delta)}.
\end{align*}

Consequently, the principal specializations of $D(\Phi^\vee)$ in the exceptional affine types
considered in this paper are given in Table~\ref{tab:principal-specialization}.

\begin{longtable}{|l|l|P{0.33\textwidth}@{}|}
	\caption{Principal specialization of $D(\Phi^\vee)$}\label{tab:principal-specialization}\\
	\hline
	Type & $\Phi^\vee$ & $\mathrm{F}_1 D(\Phi^\vee)$ \\
	\hline
	\endfirsthead
	
	\hline
	Type & $\Phi^\vee$ & $\mathrm{F}_1 D(\Phi^\vee)$ \\
	\hline
	\endhead
	
	\hline
	\endfoot
	
	$G_2^{(1)}$ & $D_4^{(3)}$ & $(1;1)^{2}(1;6)(5;6)$ \\
	\hline
	$D_4^{(3)}$ & $G_2^{(1)}$ & $(1;1)^{2}(1;6)(5;6)$ \\
	\hline
	$F_4^{(1)}$ & $E_6^{(2)}$ & $(1;1)^{4}(5;6)(1;6)$ \\
	\hline
	$E_6^{(2)}$ & $F_4^{(1)}$ & $(1;1)^{4}(5;6)(1;6)$ \\
	\hline
	$E_6^{(1)}$ & $E_6^{(1)}$ & $(1;1)^{6}(1;6)(5;6)(4;12)(8;12)$ \\
	\hline
	$E_7^{(1)}$ & $E_7^{(1)}$ & $(1;1)^{7}(1;6)(5;6)(9;18)$ \\
	\hline
	$E_8^{(1)}$ & $E_8^{(1)}$ & $(1;1)^{8}(1;6)(5;6)(5;30)^{-1}(25;30)^{-1}$ \\
	\hline
\end{longtable}

Using the principal specializations in Table~\ref{tab:principal-specialization},
we obtain the following specializations of the normalized character
$e^{-\Lambda_0}\mathrm{ch}\,L(\Lambda_0)$.

\setlength{\tabcolsep}{4pt}
\renewcommand{\arraystretch}{1.12}

\begin{longtable}{|P{0.05\textwidth}|P{0.35\textwidth}|P{0.37\textwidth}@{}|}
	\caption{Principal specialization of $e^{-\Lambda_0}\mathrm{ch}\,L(\Lambda_0)$}\label{tab:specialization}\\
	\hline
	Type & $\mathrm{F}_{(\cdot)}\!\bigl(D(\Phi^\vee)\bigr)$
	& $\mathrm{F}_1\!\left(e^{-\Lambda_0}\mathrm{ch}\,L(\Lambda_0)\right)$ \\
	\hline
	\endfirsthead
	\hline
	Type & $\mathrm{F}_{(\cdot)}\!\bigl(D(\Phi^\vee)\bigr)$
	& $\mathrm{F}_1\!\left(e^{-\Lambda}\mathrm{ch}\,L(\Lambda_0)\right)$ \\
	\hline
	\endhead
	\hline
	\endfoot
	
	$G_2^{(1)}$ &
	$\mathrm{F}_{(2,1,1)}=(1;1)^2(6;15)^{-1}(9;15)^{-1}$ &
	$(1;6)^{-1}(5;6)^{-1}(6;15)^{-1}(9;15)^{-1}$ \\
	\hline
	
	$D_4^{(3)}$ &
	$\mathrm{F}_{(2,1,1)}=(1;1)^{2}$ &
	$(5;6)^{-1}(1;6)^{-1}$ \\
	\hline
	
	$F_4^{(1)}$ &
	$\mathrm{F}_{(2,1,1,1,1)}=(1;1)^4(8;20)^{-1}(12;20)^{-1}$ &
	$(8;20)^{-1}(12;20)^{-1}(1;6)^{-1}(5;6)^{-1}$ \\
	\hline
	
	$E_6^{(2)}$ &
	$\mathrm{F}_{(2,1,1,1,1)}=(1;1)^{4}$ &
	$(5;6)^{-1}(1;6)^{-1}$ \\
	\hline
	
	$E_6^{(1)}$ &
	$\mathrm{F}_{(2,1,1,1,1,1,1)}=(1;1)^6$ &
	$(1;6)^{-1}(5;6)^{-1}(4;12)^{-1}(8;12)^{-1}$ \\
	\hline
	
	$E_7^{(1)}$ &
	$\mathrm{F}_{(2,1,1,1,1,1,1,1)}=(1;1)^7$ &
	$(1;6)^{-1}(5;6)^{-1}(9;18)^{-1}$ \\
	\hline
	
	$E_8^{(1)}$ &
	$\mathrm{F}_{(2,1,1,1,1,1,1,1,1)}=(1;1)^8$ &
	$(1;30)^{-1}(7;30)^{-1}(11;30)^{-1}(13;30)^{-1}$
	$(17;30)^{-1}(19;30)^{-1}(23;30)^{-1}(29;30)^{-1}$ \\
	\hline
	\endlastfoot
\end{longtable}

\section{Congruence, Initial and Difference conditions}\label{sec:cidcon}

In this section we take the principal specialization of the sum side
in~\eqref{eq:coloredKMN}. For all exceptional affine types considered here we have
$d_0=1$. Recall that in~\eqref{eq:coloredKMN} we set $q=e^{-\delta}$.

Let $\lambda=(\lambda_1^{b_1},\ldots,\lambda_s^{b_s})\in\mathcal{P}_\phi^{\gg}$.
Then
\[
\mathrm{F}_1\!\bigl(C(\lambda)\,e^{-\delta|\lambda|}\bigr)
=
q^{-\sum_{k=1}^{s}\operatorname{ht}\bigl(\overline{\mathrm{wt}}(b_k)\bigr)}\,
q^{\operatorname{ht}(\delta)\sum_{k=1}^{s}\lambda_k}
=
q^{\sum_{k=1}^{s}\pi_k},
\]
where we define
\begin{equation}\label{eq:pik}
	\pi_k
	:=\operatorname{ht}(\delta)\lambda_k-\operatorname{ht}\bigl(\overline{\mathrm{wt}}(b_k)\bigr)
	\qquad (1\le k\le s).
\end{equation}

Recall the set of
$B$-colored integers $\mathcal{Z}_B:=\{\,k^b \mid k\in\mathbb Z,\ b\in B\,\}$. 
We further set $\mathcal{Z}_B^{(\mathrm{fin})}:=\bigsqcup_{s\ge 1}\mathcal{Z}_B^{\,s}$,
the set of all finite (nonempty) sequences of $B$-colored integers.

Define the map
\begin{equation}\label{eq:defPhi}
	\Phi:\mathcal{P}_\phi^{\gg}\longrightarrow \mathcal{Z}_B^{(\mathrm{fin})},\qquad
	\Phi(\lambda)=(\pi_1^{b_1},\ldots,\pi_s^{b_s}),
\end{equation}
and set $\mathcal{P}'_\phi:=\Phi(\mathcal{P}_\phi^{\gg})$.

For $b\in B$, let $\gamma(b)$ be the unique representative of
$-\operatorname{ht}\bigl(\overline{\mathrm{wt}}(b)\bigr)\bmod \operatorname{ht}(\delta)$
in $\{0,1,\ldots,\operatorname{ht}(\delta)-1\}$. Equivalently,
\begin{equation}\label{eq:gammab}
	\gamma(b)=
	\begin{cases}
		-\operatorname{ht}\bigl(\overline{\mathrm{wt}}(b)\bigr),
		& \operatorname{ht}\bigl(\overline{\mathrm{wt}}(b)\bigr)\le 0,\\[2pt]
		\operatorname{ht}(\delta)-\operatorname{ht}\bigl(\overline{\mathrm{wt}}(b)\bigr),
		& \operatorname{ht}\bigl(\overline{\mathrm{wt}}(b)\bigr)>0.
	\end{cases}
\end{equation}
Then \eqref{eq:pik} implies that every part $\pi_k$ of color $b_k$ satisfies the
congruence $\pi_k\equiv \gamma(b_k)\pmod{\operatorname{ht}(\delta)}$.

The explicit congruence data for each type are listed in
Appendix~\ref{sec:congruence-condition}.

\begin{lemma}\label{lem:injPhi}
	The map $\Phi$ in \eqref{eq:defPhi} is injective.
\end{lemma}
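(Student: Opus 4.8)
The plan is to prove injectivity by exhibiting an explicit left inverse of $\Phi$. The crucial structural observation is that $\Phi$ does not disturb the color data at all: by \eqref{eq:defPhi} it sends a colored part $\lambda_k^{b_k}$ to $\pi_k^{b_k}$, leaving the color $b_k$ in place and replacing only the integer entry according to the affine rule \eqref{eq:pik}, namely $\pi_k=\operatorname{ht}(\delta)\lambda_k-\operatorname{ht}(\overline{\mathrm{wt}}(b_k))$. In particular $\Phi$ preserves the length $s$ of a sequence and acts term by term. This is exactly what makes a pointwise reconstruction possible.

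Concretely, given an element $(\pi_1^{b_1},\ldots,\pi_s^{b_s})$ of the image $\mathcal{P}'_\phi$, I would first read off the colors $b_1,\ldots,b_s\in B$ directly from the superscripts, and hence the heights $\operatorname{ht}(\overline{\mathrm{wt}}(b_k))$. I would then recover each integer part by solving \eqref{eq:pik} for $\lambda_k$: since $\delta$ is a positive root we have $\operatorname{ht}(\delta)>0$, so the linear equation $\pi_k=\operatorname{ht}(\delta)\lambda_k-\operatorname{ht}(\overline{\mathrm{wt}}(b_k))$ has the unique solution $\lambda_k=\bigl(\pi_k+\operatorname{ht}(\overline{\mathrm{wt}}(b_k))\bigr)/\operatorname{ht}(\delta)$. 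Thus the colored partition $(\lambda_1^{b_1},\ldots,\lambda_s^{b_s})$ is completely determined by its image.

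To state this as injectivity, suppose $\Phi(\lambda)=\Phi(\mu)$ for $\lambda,\mu\in\mathcal{P}_\phi^{\gg}$. Equality of the two sequences in $\mathcal{Z}_B^{(\mathrm{fin})}$ forces equal lengths and, termwise, equal colored integers; matching the colors gives identical weights $\operatorname{ht}(\overline{\mathrm{wt}}(b_k))$, and then equality of the $\pi_k$ together with $\operatorname{ht}(\delta)\neq 0$ forces $\lambda_k=\mu_k$ for every $k$. Hence $\lambda=\mu$, and $\Phi$ is injective. There is no substantial obstacle here: the argument is a direct reconstruction, and the only points needing care are the bookkeeping facts that the color superscript survives intact in the image (so that the correct value $\operatorname{ht}(\overline{\mathrm{wt}}(b_k))$ is used to invert \eqref{eq:pik}) and that $\operatorname{ht}(\delta)>0$. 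I note in particular that injectivity uses neither the relation $\gg$ nor the grounded structure of $\lambda$; those constraints are relevant only for describing the image $\mathcal{P}'_\phi$, not for recovering $\lambda$ from $\Phi(\lambda)$.
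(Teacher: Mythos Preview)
Your proof is correct and follows essentially the same approach as the paper: both invert the affine relation \eqref{eq:pik} termwise using $\operatorname{ht}(\delta)\neq 0$ to recover $\lambda_k$ from $\pi_k$ and the color $b_k$. The paper phrases this as constructing an explicit preimage formula $\lambda_k=(\pi_k+\operatorname{ht}(\overline{\mathrm{wt}}(b_k)))/\operatorname{ht}(\delta)$ and checks integrality via the congruence condition, whereas you argue more directly that $\Phi(\lambda)=\Phi(\mu)$ forces $\lambda=\mu$; your observation that neither $\gg$ nor groundedness is needed for injectivity is accurate, and your version avoids the integrality check since both $\lambda$ and $\mu$ already lie in $\mathcal{P}_\phi^{\gg}$.
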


\begin{proof}
	Let $\pi=(\pi_k^{b_k})_{1\le k\le s}\in\mathcal P'_\phi$ be a colored partition.
	We claim that $\pi$ determines a unique
	$\lambda=(\lambda_k^{b_k})_{1\le k\le s}\in\mathcal{P}_\phi^{\gg}$, namely via
	\[
	\lambda_k:=\frac{\pi_k+\operatorname{ht}\bigl(\overline{\mathrm{wt}}(b_k)\bigr)}
	{\operatorname{ht}(\delta)}
	\qquad(1\le k\le s).
	\]
	By the congruence condition $\pi_k\equiv \gamma(b_k)\pmod{\operatorname{ht}(\delta)}$
	and the definition of $\gamma$ in \eqref{eq:gammab}, we have
	\[
	\pi_k+\operatorname{ht}\bigl(\overline{\mathrm{wt}}(b_k)\bigr)\equiv 0
	\pmod{\operatorname{ht}(\delta)},
	\]
	so each $\lambda_k$ is an integer. It then follows from \eqref{eq:pik} that
	$\Phi(\lambda)=\pi$. Therefore $\Phi$ is injective.
\end{proof}

By the energy admissibility~\eqref{eq:energy-condition} and the definition~\eqref{eq:pik}, for
$1\le k\le s-1$ we obtain
\begin{equation}\label{eq:diffpi}
	\pi_k-\pi_{k+1}\ge
	\operatorname{ht}(\delta)\,H(b_{k+1}\otimes b_k)
	+\operatorname{ht}(\overline{\mathrm{wt}}(b_{k+1}))
	-\operatorname{ht}(\overline{\mathrm{wt}}(b_k))
	=
	F(b_{k+1}\otimes b_k).
\end{equation}
By Lemma~\ref{lem:Fgeq0}, the right-hand side is nonnegative.  Hence for any
$\pi=(\pi_1^{b_1},\dots,\pi_s^{b_s})\in \mathcal P'_\phi$ we have $\pi_1\ge \pi_2\ge \cdots\ge \pi_s\ge 0$.
In particular, all zero parts  occur at the end; thus $\sum_{k=1}^{s}\pi_k=\sum_{k=1}^{\ell(\pi)}\pi_k$.

By Lemma~\ref{lem:injPhi}, we can re-index the principal specialization of the left-hand side of
\eqref{eq:coloredKMN} by $\pi=\Phi(\lambda)$ and obtain
\begin{equation}\label{eq:F1left}
	\mathrm{F}_1\!\left(\sum_{\lambda\in\mathcal{P}_\phi^{\gg}} C(\lambda)e^{-\delta|\lambda|}\right)
	=
	\sum_{\pi\in\mathcal{P}'_\phi} q^{\sum_{k=1}^{\ell(\pi)}\pi_k}.
\end{equation}

Moreover, since $\lambda$ is grounded we have
$\pi_s=0$ of color $\phi$, so every $\pi\in\mathcal{P}'_g$ is a generalized
$B$-colored partition with nonnegative parts.

Define a relation $\gg_F$ on $\mathcal{Z}_B$ by
\[
\pi^{\,b}\gg_F (\pi')^{\,b'}
\quad\Longleftrightarrow\quad
\pi-\pi'\ge F(b'\otimes b).
\]
Then~\eqref{eq:diffpi} is equivalent to
\[
\pi_k^{\,b_k}\gg_F \pi_{k+1}^{\,b_{k+1}}
\qquad(1\le k\le s-1),
\]
so every $\pi\in\mathcal{P}'_\phi$ satisfies the $F$-difference conditions.

Finally, we encode the difference conditions in the \emph{difference matrix}
\[
M=(M_{b',b})_{b',b\in B},
\qquad
M_{b',b}:=F(b'\otimes b).
\]
Then the admissibility condition reads
\[
\pi_k-\pi_{k+1}\ge M_{b_{k+1},b_k}\qquad(1\le k\le s-1).
\]

\begin{lemma}\label{lem:Ftildef}
	Let $i\in I$ and $b_1,b_2\in B$. If $\widetilde f_i(b_1\otimes b_2)\ne 0$, then
	\[
	F\!\left(\widetilde f_i(b_1\otimes b_2)\right)
	=
	\begin{cases}
		F(b_1\otimes b_2)-1, & \text{if }\ \varphi_i(b_1)>\varepsilon_i(b_2),\\[4pt]
		F(b_1\otimes b_2)+1, & \text{if }\ \varphi_i(b_1)\le \varepsilon_i(b_2).
	\end{cases}
	\]
\end{lemma}

\begin{proof}
	By the tensor product rule, if $\varphi_i(b_1)>\varepsilon_i(b_2)$ then
	$\widetilde f_i(b_1\otimes b_2)=(\widetilde f_i b_1)\otimes b_2$, whereas if
	$\varphi_i(b_1)\le\varepsilon_i(b_2)$ then
	$\widetilde f_i(b_1\otimes b_2)=b_1\otimes(\widetilde f_i b_2)$.
	
	\smallskip
	\noindent\emph{Case $i\ne 0$.}
	By Definition~\ref{def:energy}, the energy $H$ is unchanged under $\widetilde f_i$ for $i\ne 0$.
	Moreover, $\overline{\mathrm{wt}}(\widetilde f_i b)=\overline{\mathrm{wt}}(b)-\alpha_i$, hence $\operatorname{ht}\bigl(\overline{\mathrm{wt}}(\widetilde f_i b)\bigr)
	=
	\operatorname{ht}\bigl(\overline{\mathrm{wt}}(b)\bigr)-1$.
	
	Therefore, we obtain $F\bigl((\widetilde f_i b_1)\otimes b_2\bigr)=F(b_1\otimes b_2)-1$ and $F\bigl(b_1\otimes(\widetilde f_i b_2)\bigr)=F(b_1\otimes b_2)+1$,
	which gives the two subcases for $i\ne 0$.
	
	\smallskip
	\noindent\emph{Case $i=0$.}
	By Definition~\ref{def:energy}, $H$ changes by $-1$ in the first subcase and by $+1$
	in the second subcase. By the crystal structure in \eqref{eq:crys}, the classical weight
	changes by $\overline{\mathrm{wt}}(\widetilde f_0 b)=\overline{\mathrm{wt}}(b)+\theta$, hence $\operatorname{ht}\bigl(\overline{\mathrm{wt}}(\widetilde f_0 b)\bigr)
	=
	\operatorname{ht}\bigl(\overline{\mathrm{wt}}(b)\bigr)+\operatorname{ht}(\theta)$.
	Since $\operatorname{ht}(\delta)-\operatorname{ht}(\theta)=1$, it follows that $F\bigl((\widetilde f_0 b_1)\otimes b_2\bigr)=F(b_1\otimes b_2)-1$ and $F\bigl(b_1\otimes(\widetilde f_0 b_2)\bigr)=F(b_1\otimes b_2)+1$.
	This proves the lemma.
\end{proof}

Since $B\otimes B$ is connected, the update rule in Lemma~\ref{lem:Ftildef}, together with the
normalization $F(\phi\otimes\phi)=0$, uniquely determines $F(b\otimes b')$ for all $b,b'\in B$.

Viewing the underlying crystal graph of $B\otimes B$ with $i$-arrows given by the Kashiwara
operators $\widetilde f_i$, one can compute $F$ by a breadth-first traversal starting from
$\phi\otimes\phi$ and propagating values along each arrow using Lemma~\ref{lem:Ftildef}.

Importantly, for each affine type $X$ under consideration, this procedure requires only the
crystal data (the operators $\widetilde f_i$ together with $\varepsilon_i$ and $\varphi_i$),
and therefore produces the full difference matrix $M_X=\bigl(F(b_1\otimes b_2)\bigr)_{b_1,b_2\in B}$
without explicitly evaluating the energy function $H$.

Starting from the normalization $F(\phi\otimes\phi)=0$, suppose that $F(b_1\otimes b_2)$ has been
determined for some $b_1,b_2\in B$. For each $i\in I$, set $D_i(b_1\otimes b_2):=\varphi_i(b_1)-\varepsilon_i(b_2)$.
By the tensor product rule and Lemma~\ref{lem:Ftildef}, $F$ can be propagated along the
$\widetilde f_i$-arrows in the connected crystal graph of $B\otimes B$ via the following local
updates:
\begin{itemize}
	\item if $D_i(b_1\otimes b_2)>0$ and $\widetilde f_i b_1\ne 0$, define $F\bigl((\widetilde f_i b_1)\otimes b_2\bigr)=F(b_1\otimes b_2)-1$;
	\item if $D_i(b_1\otimes b_2)\le 0$ and $\widetilde f_i b_2\ne 0$, define $F\bigl(b_1\otimes(\widetilde f_i b_2)\bigr)=F(b_1\otimes b_2)+1$.
\end{itemize}
Iterating these updates while traversing the connected graph of $B\otimes B$ determines $F$ on
every vertex, and hence yields the full modified difference matrix $M_X$.

\medskip

In practice, computing $M_X$ requires only the root-theoretic input specifying $B$
(the index set $I$, the set of (short) positive roots, the highest (short) root, and the
simple roots). From this data one constructs $B$, traverses the connected graph of
$B\otimes B$, propagates $F$ using the local update rules above, and thereby obtains $M_X$;
see Algorithm~\ref{alg:computeF}.

\begin{algorithm}[H]
	\caption{Computing the difference matrix $M=(F(b_1\otimes b_2))$ from crystal data}
	\label{alg:computeF}
	\begin{algorithmic}[1]
		\Require A level-one perfect crystal $B=B(\theta)\sqcup B(0)$ with ground element $\phi\in B(0)$;
		the Kashiwara operators $\widetilde f_i$ on $B$; and the functions $\varepsilon_i,\varphi_i$
		for $i\in I$.
		\Ensure The matrix $M$ with entries $M_{b_1,b_2}=F(b_1\otimes b_2)$.
		
		\State Construct the vertex set of $B$ in the prescribed order (ground element, positive roots,
		simple-root elements, negative roots).
		\State Construct the directed graph of $B\otimes B$ using the induced arrows $\widetilde f_i$.
		\State Precompute $\varepsilon_i$ and $\varphi_i$ on $B$ and lift them to $B\otimes B$.
		
		\State Initialize $F(\phi\otimes\phi)\gets 0$ and a queue $Q\gets\{\phi\otimes\phi\}$.
		\While{$Q$ is not empty}
		\State Pop $b_1\otimes b_2$ from $Q$.
		\For{each $i\in I$}
		\If{$\varphi_i(b_1)>\varepsilon_i(b_2)$ and $\widetilde f_i b_1\ne 0$}
		\State Set $b_1'\gets \widetilde f_i b_1$ and
		$F(b_1'\otimes b_2)\gets F(b_1\otimes b_2)-1$.
		\State Push $b_1'\otimes b_2$ into $Q$ if it is newly assigned.
		\ElsIf{$\widetilde f_i b_2\ne 0$}
		\State Set $b_2'\gets \widetilde f_i b_2$ and
		$F(b_1\otimes b_2')\gets F(b_1\otimes b_2)+1$.
		\State Push $b_1\otimes b_2'$ into $Q$ if it is newly assigned.
		\EndIf
		\EndFor
		\EndWhile
		\State \Return $M=(F(b_1\otimes b_2))_{b_1,b_2\in B}$.
	\end{algorithmic}
\end{algorithm}

For completeness, the matrices $M_X$ are tabulated explicitly, type by type, in
Appendix~\ref{sec:energy-matrix}.

\vskip 2mm

\begin{lemma}\label{lem:pispis-1}
	For any $\pi=(\pi_k^{b_k})_{1\le k\le s}\in\mathcal{P}'_\phi$ with $s\ge 2$, we have
	$\pi_{s-1}^{b_{s-1}}\ne 0^{\phi}$.
\end{lemma}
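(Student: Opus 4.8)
The plan is to transport the statement back to the grounded partitions $\mathcal{P}_\phi^{\gg}$ through the bijection $\Phi$ and then invoke Lemma~\ref{lem:lambda-penultimate-positive}. Concretely, since $\mathcal{P}'_\phi=\Phi(\mathcal{P}_\phi^{\gg})$ and $\Phi$ is injective by Lemma~\ref{lem:injPhi}, any $\pi=(\pi_k^{b_k})_{1\le k\le s}\in\mathcal{P}'_\phi$ arises as $\pi=\Phi(\lambda)$ for a unique $\lambda=(\lambda_k^{b_k})_{1\le k\le s}\in\mathcal{P}_\phi^{\gg}$ carrying the same color sequence $b_1,\dots,b_s$. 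In particular the color $b_{s-1}$ and the value $\pi_{s-1}$ are each determined by, and determine, the penultimate part $\lambda_{s-1}^{b_{s-1}}$ of $\lambda$ via the defining relation \eqref{eq:pik}.

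First I would argue by contradiction: suppose $\pi_{s-1}^{b_{s-1}}=0^{\phi}$, so that $\pi_{s-1}=0$ and $b_{s-1}=\phi$. Recall from the construction of $B=B(\theta)\sqcup B(0)$ (as used in the proof of Lemma~\ref{lem:Fgeq0}) that $\overline{\mathrm{wt}}(\phi)=0$, hence $\operatorname{ht}\bigl(\overline{\mathrm{wt}}(b_{s-1})\bigr)=\operatorname{ht}\bigl(\overline{\mathrm{wt}}(\phi)\bigr)=0$. Substituting into the inverse formula of Lemma~\ref{lem:injPhi} (equivalently, inverting \eqref{eq:pik}) gives
\[
\lambda_{s-1}=\frac{\pi_{s-1}+\operatorname{ht}\bigl(\overline{\mathrm{wt}}(b_{s-1})\bigr)}{\operatorname{ht}(\delta)}=\frac{0+0}{\operatorname{ht}(\delta)}=0,
\]
where we use $\operatorname{ht}(\delta)>0$.

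Since $\lambda\in\mathcal{P}_\phi^{\gg}$ with $s\ge 2$, Lemma~\ref{lem:lambda-penultimate-positive} forces $\lambda_{s-1}\ge 1$, contradicting $\lambda_{s-1}=0$. Hence $\pi_{s-1}^{b_{s-1}}\ne 0^{\phi}$, which is the desired conclusion.

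I do not expect a genuine obstacle here: the argument is essentially immediate once one observes that the color sequence is preserved by $\Phi$ and that the hypothesis $\pi_{s-1}^{b_{s-1}}=0^{\phi}$ simultaneously pins down both the value and the color of the penultimate part. The only point requiring a little care is confirming that $b_{s-1}=\phi$ indeed forces $\operatorname{ht}\bigl(\overline{\mathrm{wt}}(b_{s-1})\bigr)=0$, which is exactly the normalization $\overline{\mathrm{wt}}(\phi)=0$; everything else reduces to the already-established penultimate positivity in $\mathcal{P}_\phi^{\gg}$.
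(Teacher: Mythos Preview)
Your proof is correct and follows essentially the same route as the paper: argue by contradiction, use $\overline{\mathrm{wt}}(\phi)=0$ together with \eqref{eq:pik} to deduce $\lambda_{s-1}=0$, and then derive a contradiction. The only cosmetic difference is that you invoke Lemma~\ref{lem:lambda-penultimate-positive} for the final step, whereas the paper appeals directly to the groundedness condition (the penultimate part of a grounded partition cannot be $0^{\phi}$); both are immediate and yield the same contradiction.
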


\begin{proof}
	Assume $s\ge 2$ and suppose, for contradiction, that $\pi_{s-1}^{b_{s-1}}=0^\phi$.
	Then $b_{s-1}=\phi$ and $\pi_{s-1}=0$. Substituting into \eqref{eq:pik} gives
	\[
	0=\pi_{s-1}
	=
	\operatorname{ht}(\delta)\lambda_{s-1}-\operatorname{ht}\bigl(\overline{\mathrm{wt}}(\phi)\bigr)
	=
	\operatorname{ht}(\delta)\lambda_{s-1},
	\]
	so $\lambda_{s-1}=0$. Therefore $\lambda_{s-1}^{b_{s-1}}=0^\phi$, contradicting the
	grounded condition in the definition of $\mathcal{P}_\phi^\gg$.
\end{proof}

\begin{lemma}\label{lem:pi-penultimate-positive}
	Let $\lambda=(\lambda_1^{b_1},\dots,\lambda_s^{b_s})\in\mathcal{P}_\phi^\gg$, and let
	$\pi=\Phi(\lambda)=(\pi_1^{b_1},\dots,\pi_s^{b_s})$ be defined by \eqref{eq:pik}.
	Then $\pi_{s-1}>0$.
\end{lemma}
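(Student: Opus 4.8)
The plan is to reduce the statement to the explicit formula \eqref{eq:pik} for $\pi_{s-1}$, using the quantitative lower bound on $\lambda_{s-1}$ that is already available. Since $\pi_{s-1}$ is only defined when $s\ge 2$, I assume $s\ge 2$ throughout (the case $s=1$, where $\lambda=(0^{\phi})$, being vacuous).

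First I would invoke Lemma~\ref{lem:lambda-penultimate-positive}, which gives $\lambda_{s-1}\ge 1$ for every $\lambda\in\mathcal P_\phi^{\gg}$. This is the decisive input: it upgrades $\lambda_{s-1}$ from merely nonnegative to at least $1$, and all of the positivity will ultimately come from here.

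Next I would substitute $\lambda_{s-1}\ge 1$ into the defining relation \eqref{eq:pik},
\[
\pi_{s-1}=\operatorname{ht}(\delta)\,\lambda_{s-1}-\operatorname{ht}\bigl(\overline{\mathrm{wt}}(b_{s-1})\bigr),
\]
and estimate the two terms separately. The first term satisfies $\operatorname{ht}(\delta)\,\lambda_{s-1}\ge\operatorname{ht}(\delta)$, while the height bound \eqref{eq:hbound} gives $\operatorname{ht}\bigl(\overline{\mathrm{wt}}(b_{s-1})\bigr)\le\operatorname{ht}(\theta)$. Combining these with the identity $\operatorname{ht}(\delta)=\operatorname{ht}(\theta)+1$ of \eqref{eq:htdelta} (which holds because $\delta=\alpha_0+\theta$) yields
\[
\pi_{s-1}\ \ge\ \operatorname{ht}(\delta)-\operatorname{ht}(\theta)\ =\ 1\ >\ 0,
\]
as required.

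Once Lemma~\ref{lem:lambda-penultimate-positive} is in hand the argument is essentially a one-line estimate, so I do not anticipate a genuine obstacle. The only delicate point is the bookkeeping between the two height bounds: the subtracted term $\operatorname{ht}\bigl(\overline{\mathrm{wt}}(b_{s-1})\bigr)$ can be as large as $\operatorname{ht}(\theta)$, and the estimate succeeds precisely because the single unit of slack in $\operatorname{ht}(\delta)=\operatorname{ht}(\theta)+1$ exactly compensates this extreme case. It is worth emphasizing that this conclusion is strictly stronger than the monotonicity bound $\pi_{s-1}\ge 0$ coming from \eqref{eq:diffpi}, and also stronger than the color-level statement of Lemma~\ref{lem:pispis-1}: a vanishing $\pi_{s-1}$ carried by a weight-zero color such as some $r_{\alpha_i}$ is not excluded by those facts alone, whereas the present estimate rules it out outright.
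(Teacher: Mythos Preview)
Your proof is correct and follows essentially the same approach as the paper: invoke Lemma~\ref{lem:lambda-penultimate-positive} to get $\lambda_{s-1}\ge 1$, substitute into \eqref{eq:pik}, bound $\operatorname{ht}(\overline{\mathrm{wt}}(b_{s-1}))\le\operatorname{ht}(\theta)$ via \eqref{eq:hbound}, and use $\operatorname{ht}(\delta)=\operatorname{ht}(\theta)+1$ to conclude $\pi_{s-1}\ge 1$. Your added remarks on the $s=1$ case and on why the conclusion is genuinely stronger than Lemma~\ref{lem:pispis-1} and the monotonicity from \eqref{eq:diffpi} are accurate and worth noting.
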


\begin{proof}
	By Lemma~\ref{lem:lambda-penultimate-positive} we have $\lambda_{s-1}\ge 1$. Hence
	\[
	\pi_{s-1}
	=\operatorname{ht}(\delta)\lambda_{s-1}-\operatorname{ht}\bigl(\overline{\mathrm{wt}}(b_{s-1})\bigr)
	\ge \operatorname{ht}(\delta)-\operatorname{ht}(\theta)=1,
	\]
	where we used \eqref{eq:hbound} to bound
	$\operatorname{ht}\bigl(\overline{\mathrm{wt}}(b_{s-1})\bigr)\le \operatorname{ht}(\theta)$.
\end{proof}

Therefore, by Lemmas~\ref{lem:pispis-1}--\ref{lem:pi-penultimate-positive}, the zero part of every element
$\pi=(\pi_1^{b_1},\dots,\pi_s^{b_s})\in\mathcal{P}'_\phi$ is unique.

\begin{lemma}\label{lem:forbiddenpart}
	Let $\pi=(\pi_k^{b_k})_{1\le k\le s}\in\mathcal{P}'_\phi$. No part of $\pi$ can satisfy
	$b_k\ne \phi$ and $\pi_k=-\operatorname{ht}\bigl(\overline{\mathrm{wt}}(b_k)\bigr)$.
\end{lemma}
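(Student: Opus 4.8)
The plan is to argue by contradiction, transporting the forbidden equality back through the bijection $\Phi$ to a statement about the preimage $\lambda=(\lambda_1^{b_1},\dots,\lambda_s^{b_s})\in\mathcal P_\phi^\gg$. Writing $h(b):=\operatorname{ht}\bigl(\overline{\mathrm{wt}}(b)\bigr)$, formula \eqref{eq:pik} reads $\pi_k=\operatorname{ht}(\delta)\lambda_k-h(b_k)$, so the forbidden equality $\pi_k=-h(b_k)$ is equivalent to $\lambda_k=0$. Thus I would suppose, for contradiction, that some index $k$ satisfies $b_k\ne\phi$ together with $\lambda_k=0$. Since the ground condition forces $b_s=\phi$, the hypothesis $b_k\ne\phi$ already yields $k\le s-1$; moreover, by the uniqueness of the zero part (Lemmas~\ref{lem:pispis-1}--\ref{lem:pi-penultimate-positive}) the only vanishing part of $\pi$ is $\pi_s=0^\phi$, so $\pi_k>0$. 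Forcing $\pi_k=-h(b_k)>0$ shows $h(b_k)<0$, i.e.\ $b_k=x_{-\alpha}$ for some $\alpha\in R^+$; this is the genuinely nontrivial case, the colors $r_{\alpha_i}$ and $x_\beta$ being immediately excluded since they would require $\pi_k=0$ or $\pi_k<0$.

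The core of the argument is a forward induction proving $\lambda_j=0$ for every $j$ with $k\le j\le s-1$, which will contradict Lemma~\ref{lem:lambda-penultimate-positive}. For the inductive step, assume $\lambda_j=0$ with $j\le s-2$. Applying the energy admissibility \eqref{eq:energy-condition} to the step $\lambda_j^{b_j}\gg\lambda_{j+1}^{b_{j+1}}$ gives $\lambda_j-\lambda_{j+1}\ge H(b_{j+1}\otimes b_j)$, and since $H$ takes values in $\{0,1,2\}$ this forces $\lambda_{j+1}\le 0$. On the other hand, $j+1\le s-1$, so again by uniqueness of the zero part we have $\pi_{j+1}>0$.

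To upgrade $\lambda_{j+1}\le 0$ to the equality $\lambda_{j+1}=0$, I would combine this positivity with the height bound \eqref{eq:hbound}: if $\lambda_{j+1}\le -1$, then using $-h(b_{j+1})\le\operatorname{ht}(\theta)$ and $\operatorname{ht}(\delta)=\operatorname{ht}(\theta)+1$ from \eqref{eq:htdelta} we obtain $\pi_{j+1}=\operatorname{ht}(\delta)\lambda_{j+1}-h(b_{j+1})\le-\operatorname{ht}(\delta)+\operatorname{ht}(\theta)=-1$, contradicting $\pi_{j+1}>0$. Hence $\lambda_{j+1}=0$, closing the induction. Iterating from $j=k$ up to $j=s-2$ forces $\lambda_{s-1}=0$, which contradicts Lemma~\ref{lem:lambda-penultimate-positive}, whose estimate $\lambda_{s-1}\ge 1$ then finishes the proof. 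Notably the inductive step never uses the specific color $b_{j+1}$: only $H\ge 0$, the positivity of interior parts, and the bound \eqref{eq:hbound} enter.

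I expect the sole real subtlety to be this upgrade from $\lambda_{j+1}\le 0$ to $\lambda_{j+1}=0$. Without the strict positivity of the interior parts $\pi_{j+1}$ one could only propagate the weak inequality $\lambda_{j+1}\le 0$, which is not enough to reach a contradiction; it is precisely the uniqueness of the zero part that guarantees $\pi_{j+1}>0$ for $j+1\le s-1$ and thereby converts the chain of inequalities into the chain of equalities $\lambda_k=\cdots=\lambda_{s-1}=0$, after which the penultimate-part estimate closes the argument.
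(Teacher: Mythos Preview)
Your proof is correct, but works considerably harder than the paper's one-line argument. Both begin identically, pulling back through $\Phi$ to obtain $\lambda_k=0$ from $\pi_k=-h(b_k)$; the paper then simply asserts that a zero part of $\lambda$ with non-ground color contradicts the grounded structure of $\mathcal P_\phi^\gg$. The tacit reasoning behind that assertion is plain monotonicity: since $H\ge 0$, the relation $\gg$ gives $\lambda_1\ge\lambda_2\ge\cdots\ge\lambda_s=0$, so every $\lambda_j\ge 0$; hence $\lambda_k=0$ with $k\le s-1$ forces $\lambda_{s-1}\le\lambda_k=0$, contradicting Lemma~\ref{lem:lambda-penultimate-positive}. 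Your forward induction reaches the same endpoint, but the ``upgrade'' step---invoking $\pi_{j+1}>0$ together with the height bound \eqref{eq:hbound} to rule out $\lambda_{j+1}\le-1$---is redundant, since $\lambda_{j+1}\ge\lambda_s=0$ is immediate from monotonicity of $\lambda$. The preliminary case analysis identifying $b_k=x_{-\alpha}$ is likewise never used downstream and can be dropped.
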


\begin{proof}
	If $\pi_k=-\operatorname{ht}\bigl(\overline{\mathrm{wt}}(b_k)\bigr)$, then by \eqref{eq:pik} we have
	$\lambda_k=0$. Since $b_k\ne\phi$, this gives a zero part of $\lambda$ with non-ground color,
	contradicting the groundedness condition defining $\mathcal{P}_\phi^\gg$.
\end{proof}

By Lemmas~\ref{lem:pispis-1}--\ref{lem:forbiddenpart}, the set of forbidden parts in
$\mathcal{P}'_\phi$ is
\begin{equation*}
	\mathrm{For}_\phi=
	\{\,0^b\mid b\in B\setminus\{\phi\}\,\}
	\ \cup\
	\left\{\,
	\bigl(-\operatorname{ht}(\overline{\mathrm{wt}}(b))\bigr)^{\,b}
	\ \middle|\ b\in B\setminus\{\phi\}
	\,\right\}.
\end{equation*}

Since every $\pi\in\mathcal{P}'_\phi$ ends with $0^\phi$ and has $\pi_{s-1}>0$, it suffices to
exclude the parts in $\mathrm{For}_\phi$.

\vskip 2mm

From the tables in Appendix \ref{sec:congruence-condition}, we obtain the following forbidden initial colored parts ${\rm ICon}_X$ of each type:

\begin{longtable}{|c|p{0.8\textwidth}|}
	\caption{The initial conditions}\label{tab:initial-condition}\\	
	\hline
	Type $X$ & Forbidden initial colored parts ${\rm ICon}_X$ \\
	\hline
	\endfirsthead
	
	\hline
	Type $X$ & Forbidden initial colored parts ${\rm ICon}_X$ \\
	\hline
	\endhead
	
	$D_4^{(3)}$ &
	\begin{tabular}[t]{@{}l@{}}
		$1^{\overline{(10)}},\;
		2^{\overline{(11)}},\;
		3^{\overline{(21)}}$
	\end{tabular}
	\\ \hline
	
	$G_2^{(1)}$ &
	\begin{tabular}[t]{@{}l@{}}
		$1^{\overline{(10)}},\;
		1^{\overline{(01)}},\;
		2^{\overline{(11)}},\;
		3^{\overline{(12)}},\;
		4^{\overline{(13)}},\;
		5^{\overline{(23)}}$
	\end{tabular}
	\\ \hline
	
	$E_6^{(2)}$ &
	\begin{tabular}[t]{@{}l@{}}
		$1^{\overline{(1000)}},\ 
		1^{\overline{(0100)}},\ 
		2^{\overline{(0110)}},\ 
		2^{\overline{(1100)}},\ 
		3^{\overline{(0111)}},\ 
		3^{\overline{(1110)}},\
		4^{\overline{(1210)}},\ 
		4^{\overline{(1111)}},\ 
		5^{\overline{(1211)}},\ 
		6^{\overline{(1221)}},$\\ 
		$7^{\overline{(1321)}},\ 
		8^{\overline{(2321)}}$
	\end{tabular}
	\\ \hline
	
	$F_4^{(1)}$ &
	\begin{tabular}[t]{@{}l@{}}
		$1^{\overline{(1000)}},\ 1^{\overline{(0100)}},\ 1^{\overline{(0010)}},\ 1^{\overline{(0001)}},\
		2^{\overline{(1100)}},\ 2^{\overline{(0110)}},\ 2^{\overline{(0011)}},\
		3^{\overline{(1110)}},\ 3^{\overline{(0120)}},\ 3^{\overline{(0111)}},$\\
		$4^{\overline{(1120)}},\ 4^{\overline{(1111)}},\ 4^{\overline{(0121)}},\
		5^{\overline{(1220)}},\ 5^{\overline{(1121)}},\ 5^{\overline{(0122)}},\
		6^{\overline{(1221)}},\ 6^{\overline{(1122)}},\
		7^{\overline{(1231)}},\ 7^{\overline{(1222)}},$\\
		$8^{\overline{(1232)}},\ 
		9^{\overline{(1242)}},\ 
		10^{\overline{(1342)}},\ 
		11^{\overline{(2342)}}$
	\end{tabular}
	\\ \hline
	
	$E_6^{(1)}$ &
	\begin{tabular}[t]{@{}l@{}}
		$1^{\overline{(100000)}},\ 1^{\overline{(010000)}},\ 1^{\overline{(001000)}},\ 
		1^{\overline{(000100)}},\ 1^{\overline{(000010)}},\ 1^{\overline{(000001)}},\
		2^{\overline{(101000)}},\ 2^{\overline{(010100)}},$\\ $2^{\overline{(001100)}},\ 
		2^{\overline{(000110)}},\ 2^{\overline{(000011)}},\
		3^{\overline{(101100)}},\ 3^{\overline{(011100)}},\ 3^{\overline{(010110)}},\ 
		3^{\overline{(001110)}},\ 3^{\overline{(000111)}},$\\
		$4^{\overline{(111100)}},\ 4^{\overline{(101110)}},\ 4^{\overline{(011110)}},\ 
		4^{\overline{(010111)}},\ 4^{\overline{(001111)}},\
		5^{\overline{(111110)}},\ 5^{\overline{(101111)}},\ 5^{\overline{(011210)}},$\\ 
		$5^{\overline{(011111)}},\
		6^{\overline{(111210)}},\ 6^{\overline{(111111)}},\ 6^{\overline{(011211)}},\
		7^{\overline{(112210)}},\ 7^{\overline{(111211)}},\ 7^{\overline{(011221)}},\
		8^{\overline{(112211)}},$\\ $8^{\overline{(111221)}},\
		9^{\overline{(112221)}},\ 
		10^{\overline{(112321)}},\ 
		11^{\overline{(122321)}}$
	\end{tabular}
	\\ \hline
	
	$E_7^{(1)}$ &
	\begin{tabular}[t]{@{}l@{}}
		$1^{\overline{(1000000)}},\ 1^{\overline{(0100000)}},\ 1^{\overline{(0010000)}},\ 
		1^{\overline{(0001000)}},\ 1^{\overline{(0000100)}},\ 1^{\overline{(0000010)}},\ 1^{\overline{(0000001)}},\
		2^{\overline{(1010000)}},$\\ $2^{\overline{(0101000)}},\ 2^{\overline{(0011000)}},\ 
		2^{\overline{(0001100)}},\ 2^{\overline{(0000110)}},\ 2^{\overline{(0000011)}},\
		3^{\overline{(1011000)}},\ 3^{\overline{(0111000)}},\ 3^{\overline{(0101100)}},$\\ 
		$3^{\overline{(0011100)}},\ 3^{\overline{(0001110)}},\ 3^{\overline{(0000111)}},\
		4^{\overline{(1111000)}},\ 4^{\overline{(1011100)}},\ 4^{\overline{(0111100)}},\ 
		4^{\overline{(0101110)}},\ 4^{\overline{(0011110)}},$\\ $4^{\overline{(0001111)}},\
		5^{\overline{(1111100)}},\ 5^{\overline{(1011110)}},\ 5^{\overline{(0112100)}},\ 
		5^{\overline{(0111110)}},\ 5^{\overline{(0101111)}},\ 5^{\overline{(0011111)}},\
		6^{\overline{(1112100)}},$\\ $6^{\overline{(1111110)}},\ 6^{\overline{(1011111)}},\ 
		6^{\overline{(0112110)}},\ 6^{\overline{(0111111)}},\
		7^{\overline{(1122100)}},\ 7^{\overline{(1112110)}},\ 7^{\overline{(1111111)}},\ 
		7^{\overline{(0112210)}},$\\ $7^{\overline{(0112111)}},\
		8^{\overline{(1122110)}},\ 8^{\overline{(1112210)}},\ 8^{\overline{(1112111)}},\ 
		8^{\overline{(0112211)}},\
		9^{\overline{(1122210)}},\ 9^{\overline{(1122111)}},\ 9^{\overline{(1112211)}},$\\ 
		$9^{\overline{(0112221)}},\
		10^{\overline{(1123210)}},\ 10^{\overline{(1122211)}},\ 10^{\overline{(1112221)}},\
		11^{\overline{(1223210)}},\ 11^{\overline{(1123211)}},\ 11^{\overline{(1122221)}},$\\
		$12^{\overline{(1223211)}},\ 12^{\overline{(1123221)}},\
		13^{\overline{(1223221)}},\ 13^{\overline{(1123321)}},\
		14^{\overline{(1223321)}},\ 
		15^{\overline{(1224321)}},\ 
		16^{\overline{(1234321)}},$\\ 
		$17^{\overline{(2234321)}}$
	\end{tabular}
	\\ \hline
	
	$E_8^{(1)}$ &
	\begin{tabular}[t]{@{}l@{}}
		$1^{\overline{(00000001)}},\ 1^{\overline{(00000010)}},\ 1^{\overline{(00000100)}},\
		1^{\overline{(00001000)}},\ 1^{\overline{(00010000)}},\ 1^{\overline{(00100000)}},\
		1^{\overline{(01000000)}},$\\ $1^{\overline{(10000000)}},\
		2^{\overline{(00000011)}},\ 2^{\overline{(00000110)}},\ 2^{\overline{(00001100)}},\
		2^{\overline{(00011000)}},\ 2^{\overline{(00110000)}},\ 2^{\overline{(01010000)}},$\\
		$2^{\overline{(10100000)}},\
		3^{\overline{(00000111)}},\ 3^{\overline{(00001110)}},\ 3^{\overline{(00011100)}},\
		3^{\overline{(00111000)}},\ 3^{\overline{(01011000)}},\ 3^{\overline{(01110000)}},$\\
		$3^{\overline{(10110000)}},\
		4^{\overline{(00001111)}},\ 4^{\overline{(00011110)}},\ 4^{\overline{(00111100)}},\
		4^{\overline{(01011100)}},\ 4^{\overline{(01111000)}},\ 4^{\overline{(10111000)}},$\\
		$4^{\overline{(11110000)}},\
		5^{\overline{(00011111)}},\ 5^{\overline{(00111110)}},\ 5^{\overline{(01011110)}},\
		5^{\overline{(01111100)}},\ 5^{\overline{(01121000)}},\ 5^{\overline{(10111100)}},$\\
		$5^{\overline{(11111000)}},\
		6^{\overline{(00111111)}},\ 6^{\overline{(01011111)}},\ 6^{\overline{(01111110)}},\
		6^{\overline{(01121100)}},\ 6^{\overline{(10111110)}},\ 6^{\overline{(11111100)}},$\\
		$6^{\overline{(11121000)}},\
		7^{\overline{(01111111)}},\ 7^{\overline{(01121110)}},\ 7^{\overline{(01122100)}},\
		7^{\overline{(10111111)}},\ 7^{\overline{(11111110)}},\ 7^{\overline{(11121100)}},$\\
		$7^{\overline{(11221000)}},\
		8^{\overline{(01121111)}},\ 8^{\overline{(01122110)}},\ 8^{\overline{(11111111)}},\
		8^{\overline{(11121110)}},\ 8^{\overline{(11122100)}},\ 8^{\overline{(11221100)}},$\\
		$9^{\overline{(01122111)}},\ 9^{\overline{(01122210)}},\ 9^{\overline{(11121111)}},\
		9^{\overline{(11122110)}},\ 9^{\overline{(11221110)}},\ 9^{\overline{(11222100)}},\
		10^{\overline{(01122211)}},$\\ $10^{\overline{(11122111)}},\ 10^{\overline{(11122210)}},\
		10^{\overline{(11221111)}},\ 10^{\overline{(11222110)}},\ 10^{\overline{(11232100)}},\
		11^{\overline{(01122221)}},$\\ $11^{\overline{(11122211)}},\ 11^{\overline{(11222111)}},\
		11^{\overline{(11222210)}},\ 11^{\overline{(11232110)}},\ 11^{\overline{(12232100)}},\
		12^{\overline{(11122221)}},$\\ $12^{\overline{(11222211)}},\ 12^{\overline{(11232111)}},\
		12^{\overline{(11232210)}},\ 12^{\overline{(12232110)}},\
		13^{\overline{(11222221)}},\ 13^{\overline{(11232211)}},$\\ $13^{\overline{(11233210)}},\
		13^{\overline{(12232111)}},\ 13^{\overline{(12232210)}},\
		14^{\overline{(11232221)}},\ 14^{\overline{(11233211)}},\ 14^{\overline{(12232211)}},$\\
		$14^{\overline{(12233210)}},\
		15^{\overline{(11233221)}},\ 15^{\overline{(12232221)}},\ 15^{\overline{(12233211)}},\
		15^{\overline{(12243210)}},\
		16^{\overline{(11233321)}},$\\ $16^{\overline{(12233221)}},\ 16^{\overline{(12243211)}},\
		16^{\overline{(12343210)}},\
		17^{\overline{(12233321)}},\ 17^{\overline{(12243221)}},\ 17^{\overline{(12343211)}},$\\
		$17^{\overline{(22343210)}},\
		18^{\overline{(12243321)}},\ 18^{\overline{(12343221)}},\ 18^{\overline{(22343211)}},\
		19^{\overline{(12244321)}},\ 19^{\overline{(12343321)}},$\\ $19^{\overline{(22343221)}},\
		20^{\overline{(12344321)}},\ 20^{\overline{(22343321)}},\
		21^{\overline{(12354321)}},\ 21^{\overline{(22344321)}},\
		22^{\overline{(13354321)}},$\\ $22^{\overline{(22354321)}},\
		23^{\overline{(22454321)}},\ 23^{\overline{(23354321)}},\
		24^{\overline{(23454321)}},\
		25^{\overline{(23464321)}},\
		26^{\overline{(23465321)}},$\\
		$27^{\overline{(23465421)}},\
		28^{\overline{(23465431)}},\
		29^{\overline{(23465432)}}$
	\end{tabular}
	\\ \hline
	
\end{longtable}

\section{The partition identity}\label{sec:pi}

From Table~\ref{tab:specialization}, together with the extra factor
$(q^{\operatorname{ht}(\delta)};q^{\operatorname{ht}(\delta)})_\infty^{-1}$
arising from $(q;q)_\infty^{-1}$ in \eqref{eq:coloredKMN} under principal specialization,
we define, for each affine type $X$, a set $P_X$ by
\[
P_X
:= \bigsqcup_{t=1}^{T_X}\{\,m\in\mathbb Z_{>0}\mid m\equiv r_t \pmod{N_t}\,\},
\]
with congruence data $(r_t,N_t)$ as listed in Table~\ref{tab:PX-congruence-data}.

\begin{table}[H]
	\centering
	\setlength{\tabcolsep}{6pt}
	\renewcommand{\arraystretch}{1.15}
	\caption{Congruence data $(r_t,N_t)$ for the sets $P_X$}
	\label{tab:PX-congruence-data}
	\begin{tabular}{|c|c|l|}
		\hline
		Type $X$ & $T_X$ & Congruence data $\{(r_t,N_t)\}_{t=1}^{T_X}$ \\
		\hline
		$G_2^{(1)}$ & $5$ &
		$\{(0,6),(1,6),(5,6),(6,15),(9,15)\}$ \\
		\hline
		$D_4^{(3)}$ & $3$ &
		$\{(1,6),(5,6),(0,4)\}$ \\
		\hline
		$E_6^{(2)}$ & $3$ &
		$\{(1,6),(5,6),(0,9)\}$ \\
		\hline
		$F_4^{(1)}$ & $5$ &
		$\{(1,6),(5,6),(8,20),(12,20),(0,12)\}$ \\
		\hline
		$E_6^{(1)}$ & $5$ &
		$\{(1,6),(5,6),(0,12),(4,12),(8,12)\}$ \\
		\hline
		$E_7^{(1)}$ & $4$ &
		$\{(1,6),(5,6),(0,18),(9,18)\}$ \\
		\hline
		$E_8^{(1)}$ & $9$ &
		$\{(0,30),(1,30),(7,30),(11,30),(13,30),(17,30),(19,30),(23,30),(29,30)\}$ \\
		\hline
	\end{tabular}
\end{table}

For each $m\in\mathbb Z_{>0}$, define $a_m^{(X)} \;:=\; \sum_{t=1}^{T_X}\mathbf{1}_{\,m\equiv r_t\;(\mathrm{mod}\,N_t)}$. Equivalently, $a_m^{(X)}$ is the multiplicity of $m$ when $P_X$ is viewed as a multiset,
i.e.\ each congruence class in Table~\ref{tab:PX-congruence-data} contributes one copy of
every integer it contains. 

In particular, the sequence $(a_m^{(X)})_{m\ge 1}$ is uniquely
determined by the Euler-type product identity
\[
\prod_{t=1}^{T_X}\ \prod_{\substack{k\ge 1\\ k\equiv r_t\;(\mathrm{mod}\,N_t)}} \frac{1}{1-q^k}
\;=\;
\prod_{m\ge 1}(1-q^m)^{-a_m^{(X)}}.
\]
The product on the right is the generating function for partitions in which a part of size
$m$ comes in $a_m^{(X)}$ distinguishable species (equivalently, colors).

If $X\in\{D_4^{(3)},E_6^{(2)},E_6^{(1)},E_7^{(1)},E_8^{(1)}\}$, then the residue classes
$\{\,m\equiv r_t\pmod{N_t}\,\}_{t=1}^{T_X}$ are pairwise disjoint. Hence $a_m^{(X)}\in\{0,1\}$ for all $m\in\mathbb Z_{>0}$,
and moreover $a_m^{(X)}=1$ if and only if $m\in P_X$.

\smallskip

If $X\in\{G_2^{(1)},F_4^{(1)}\}$, then overlaps occur and one can have $a_m^{(X)}>1$.
More precisely:
\begin{itemize}
	\item If $X=G_2^{(1)}$, then
	\[
	a_m^{(G_2^{(1)})}
	= \mathbf 1_{\,m\equiv 0\;(\mathrm{mod}\,6)}
	+ \mathbf 1_{\,m\equiv 1\;(\mathrm{mod}\,6)}
	+ \mathbf 1_{\,m\equiv 5\;(\mathrm{mod}\,6)}
	+ \mathbf 1_{\,m\equiv 6\;(\mathrm{mod}\,15)}
	+ \mathbf 1_{\,m\equiv 9\;(\mathrm{mod}\,15)}
	\in\{0,1,2\}.
	\]
	In particular, $a_m^{(G_2^{(1)})}=2$ if and only if
	\[
	m\equiv 6 \pmod{30}\quad\text{or}\quad m\equiv 24 \pmod{30}.
	\]
	
	\item If $X=F_4^{(1)}$, then
	\[
	a_m^{(F_4^{(1)})}
	= \mathbf 1_{\,m\equiv 1\;(\mathrm{mod}\,6)}
	+ \mathbf 1_{\,m\equiv 5\;(\mathrm{mod}\,6)}
	+ \mathbf 1_{\,m\equiv 8\;(\mathrm{mod}\,20)}
	+ \mathbf 1_{\,m\equiv 12\;(\mathrm{mod}\,20)}
	+ \mathbf 1_{\,m\equiv 0\;(\mathrm{mod}\,12)}
	\in\{0,1,2\}.
	\]
	In particular, $a_m^{(F_4^{(1)})}=2$ if and only if
	\[
	m\equiv 12 \pmod{60}\quad\text{or}\quad m\equiv 48 \pmod{60}.
	\]
\end{itemize}

\begin{definition}\label{def:cp}
Let $\mathcal A_X:=\{\, (m,t)\mid m\in P_X,\ 1\le t\le a_m^{(X)}\,\}$
denote the set of \emph{allowable colored parts}. 
A \emph{$P_X$-colored partition} of an integer $p$ is a finite multiset $M$ of elements of $\mathcal A_X$ (with repetitions allowed) such that $	\sum_{(m,t)\in M} m = p$, where each occurrence of a part $(m,t)$ contributes $m$ to the total sum. Let $c(p)$ denote the number of $P_X$-colored partitions of $p$.
\end{definition}

\begin{remark}\label{rem:cp-order}
	In Definition~\ref{def:cp}, a $P_X$-colored partition is a finite multiset, so it carries no
	intrinsic ordering of its parts. When convenient, one may choose any fixed total order on
	$\mathcal A_X$ and list the parts in weakly decreasing order; the resulting sequence is merely a
	presentation of the same multiset.
\end{remark}

If $X\in\{D_4^{(3)},E_6^{(2)},E_6^{(1)},E_7^{(1)},E_8^{(1)}\}$, then $a_m^{(X)}\in\{0,1\}$ for all
$m$, so $c(p)$ is simply the number of (ordinary) partitions of $p$ whose parts lie in $P_X$.
Consequently,
\begin{equation}\label{eq:disjoint-cases-cp}
	\sum_{p\ge 0}c(p)\,q^p
	=\prod_{m\ge 1}(1-q^m)^{-a_m^{(X)}}
	=\prod_{r\in P_X}\frac{1}{1-q^r}.
\end{equation}

If $X\in\{G_2^{(1)},F_4^{(1)}\}$, then overlaps occur and $c(p)$ counts $P_X$-colored partitions,
i.e.\ a part of size $m$ is available in $a_m^{(X)}$ distinguishable colors. The generating
function is
\begin{equation}\label{eq:overlapping-cases}
	\sum_{p\ge 0}c(p)\,q^p
	=\prod_{m\ge 1}(1-q^m)^{-a_m^{(X)}}
	=\prod_{t=1}^{T_X}\ \prod_{\substack{k\ge 1\\ k\equiv r_t\;(\mathrm{mod}\,N_t)}} \frac{1}{1-q^k}
	=\prod_{r\in P_X}\frac{1}{1-q^r},
\end{equation}
where in the last expression $P_X$ is understood as a multiset (each congruence condition contributes
one copy of every integer it contains).

\begin{definition}\label{def:dp}
	For $p\in\mathbb Z_{\ge 0}$, define
	\[
	d(p):=\#\left\{\pi\in\mathcal P'_\phi\ \middle|\ \sum_{k=1}^{\ell(\pi)}\pi_k=p\right\}.
	\]
\end{definition}
Then, we obtain 
\begin{equation}\label{eq:dp}
	\sum_{p\ge 0} d(p)\,q^p=\sum_{\pi\in\mathcal P'_\phi} q^{\sum_{k=1}^{\ell(\pi)} \pi_k}.
\end{equation}

\begin{theorem}[Partition identity]\label{thm:partition-identity}
	With $c(p)$ and $d(p)$ as in Definitions~\ref{def:cp}--\ref{def:dp}, we have
	\[
	c(p)=d(p)\qquad\text{for all } p\in\mathbb Z_{\ge 0}.
	\]
\end{theorem}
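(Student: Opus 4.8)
The plan is to prove the equivalent statement that the two generating functions coincide, namely
\[
\sum_{p\ge 0}d(p)\,q^p=\sum_{p\ge 0}c(p)\,q^p,
\]
and then to read off $c(p)=d(p)$ by comparing coefficients of $q^p$. Both sides have already been put in closed form: the sum side by \eqref{eq:dp}, and the product side by \eqref{eq:disjoint-cases-cp}--\eqref{eq:overlapping-cases}, which give $\sum_{p\ge 0}c(p)\,q^p=\prod_{r\in P_X}(1-q^r)^{-1}$ with $P_X$ understood as a multiset. The only missing link is an explicit product evaluation of the sum side, and for this I would invoke the Dousse--Konan formula.

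Concretely, I would apply the principal specialization $\mathrm F_1$ to both sides of \eqref{eq:coloredKMN} at $\mu=\Lambda_0$ and $g=\phi$. By \eqref{eq:F1left} together with \eqref{eq:dp}, the left-hand side specializes to $\sum_{p\ge 0}d(p)\,q^p$. On the right-hand side, the numerator $e^{-\Lambda_0}\mathrm{ch}\,L(\Lambda_0)$ specializes as recorded in Table~\ref{tab:specialization}, while the factor $(q;q)_\infty^{-1}$ of \eqref{eq:coloredKMN}---in which $q=e^{-\delta}$---becomes $\bigl(q^{\operatorname{ht}(\delta)};q^{\operatorname{ht}(\delta)}\bigr)_\infty^{-1}$, since $\mathrm F_1(e^{-\delta})=q^{\operatorname{ht}(\delta)}$. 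This would reduce the theorem to verifying, for each of the seven types $X$, the product identity
\[
\frac{\mathrm F_1\!\left(e^{-\Lambda_0}\,\mathrm{ch}\,L(\Lambda_0)\right)}{\bigl(q^{\operatorname{ht}(\delta)};q^{\operatorname{ht}(\delta)}\bigr)_\infty}
=\prod_{r\in P_X}\frac{1}{1-q^r}.
\]

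The remaining work is then a type-by-type bookkeeping comparison of the Euler product in Table~\ref{tab:specialization} against the congruence data of Table~\ref{tab:PX-congruence-data}. Writing $(x;y)=(q^x;q^y)_\infty$, the extra factor $\bigl(q^{\operatorname{ht}(\delta)};q^{\operatorname{ht}(\delta)}\bigr)_\infty^{-1}$ is exactly the contribution of positive parts divisible by $\operatorname{ht}(\delta)$, i.e.\ the residue class $(0,\operatorname{ht}(\delta))$; this accounts for the $(0,N)$ entry appearing in each row of Table~\ref{tab:PX-congruence-data}, while the remaining reciprocal factors in Table~\ref{tab:specialization} match the remaining congruence classes of $P_X$ one-for-one. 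As a sample check, in type $D_4^{(3)}$ one has $\operatorname{ht}(\delta)=4$ and numerator $(1;6)^{-1}(5;6)^{-1}$, so the sum side equals $(1;6)^{-1}(5;6)^{-1}(4;4)^{-1}$, which is precisely $\prod_{r\in P_X}(1-q^r)^{-1}$ for the data $\{(1,6),(5,6),(0,4)\}$; the same direct comparison would be carried out for each type.

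I expect the one genuinely delicate point to be the two non-disjoint types $G_2^{(1)}$ and $F_4^{(1)}$, where the listed residue classes overlap and a part of a given size may carry multiplicity $2$. There the matching must be read at the level of multisets: each congruence class in Table~\ref{tab:PX-congruence-data} contributes one copy of every integer it contains, so that $\prod_{r\in P_X}(1-q^r)^{-1}=\prod_{m\ge 1}(1-q^m)^{-a_m^{(X)}}$ with the multiplicities $a_m^{(X)}\in\{0,1,2\}$ computed earlier. Once this multiset convention is respected---so that \eqref{eq:overlapping-cases} applies verbatim---the product identity above holds in these two cases as well. Comparing coefficients of $q^p$ in the resulting generating-function equality would then complete the proof.
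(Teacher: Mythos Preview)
Your proposal is correct and follows essentially the same route as the paper: apply the principal specialization to both sides of \eqref{eq:coloredKMN}, identify the left-hand side with $\sum d(p)q^p$ via \eqref{eq:F1left} and \eqref{eq:dp}, identify the right-hand side with $\sum c(p)q^p$ via Table~\ref{tab:specialization}, the extra factor $(q^{\operatorname{ht}(\delta)};q^{\operatorname{ht}(\delta)})_\infty^{-1}$, and \eqref{eq:disjoint-cases-cp}--\eqref{eq:overlapping-cases}, and then compare coefficients. The paper is slightly terser because the type-by-type matching you describe is already absorbed into the definition of $P_X$ (the sentence preceding Table~\ref{tab:PX-congruence-data}), so it cites \eqref{eq:RHS-cp-correct} directly rather than spelling out the bookkeeping, but the argument is the same.
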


\begin{proof}
	Recall the principal specialization of both sides of \eqref{eq:coloredKMN}:
	\begin{equation}\label{eq:DK20-ps-correct}
		\mathrm F_1\!\left(\sum_{\lambda\in\mathcal P_\phi^{\gg}} C(\lambda)\,e^{-\delta|\lambda|}\right)
		=
		\frac{\mathrm F_1\!\bigl(e^{-\Lambda_0}\,\mathrm{ch}\,L(\Lambda_0)\bigr)}
		{(q^{\operatorname{ht}(\delta)};q^{\operatorname{ht}(\delta)})_\infty}.
	\end{equation}
	
	By Definition~\ref{def:dp}, \eqref{eq:F1left} and \eqref{eq:dp}, we have
	\begin{equation}\label{eq:LHS-dp-correct}
		\mathrm{F}_1\!\left(\sum_{\lambda\in\mathcal{P}_\phi^{\gg}} C(\lambda)e^{-\delta|\lambda|}\right)
		=\sum_{\pi\in\mathcal P'_\phi} q^{\sum_{k=1}^{\ell(\pi)}\pi_k}
		=\sum_{p\ge 0} d(p)\,q^p.
	\end{equation}
	
	From the congruence data $\{(r_t,N_t)\}_{t=1}^{T_X}$ in Table~\ref{tab:PX-congruence-data},
	together with Definition~\ref{def:cp} and \eqref{eq:disjoint-cases-cp}--\eqref{eq:overlapping-cases},
	we obtain
	\begin{equation}\label{eq:RHS-cp-correct}
		\frac{\mathrm F_1\!\bigl(e^{-\Lambda_0}\,\mathrm{ch}\,L(\Lambda_0)\bigr)}
		{(q^{\operatorname{ht}(\delta)};q^{\operatorname{ht}(\delta)})_\infty}
		=
		\prod_{t=1}^{T_X}\ \prod_{\substack{k\ge 1\\ k\equiv r_t\;(\mathrm{mod}\,N_t)}} \frac{1}{1-q^k}
		=
		\prod_{m\ge 1}(1-q^m)^{-a_m^{(X)}}
		=\sum_{p\ge 0} c(p)\,q^p.
	\end{equation}
	
	Combining \eqref{eq:DK20-ps-correct}, \eqref{eq:LHS-dp-correct}, and \eqref{eq:RHS-cp-correct} yields
	$\sum_{p\ge 0} d(p)\,q^p=\sum_{p\ge 0} c(p)\,q^p$. Therefore $c(p)=d(p)$ for all
	$p\in\mathbb Z_{\ge 0}$.
\end{proof}

\section{Computational coefficient check (example: $p\le 60$)}\label{sec:coeff-check-p60}

As a reproducibility check, we expanded both sides of the generating-function identity
\[
\sum_{p\ge 0} c(p)q^p=\sum_{p\ge 0} d(p)q^p
\]
up to order $q^{60}$ for each exceptional affine type $X$ considered in this paper.
Here $c(p)$ is obtained by expanding the Euler-type product
$\prod_{m\ge 1}(1-q^m)^{-a_m^{(X)}}$, and $d(p)$ is obtained by enumerating the modified
grounded colored partitions $\pi\in\mathcal P'_\phi$ satisfying the congruence, initial,
and difference conditions encoded by $(\mathrm{CCon}_X,\mathrm{ICon}_X,M_X)$.
In all seven cases the coefficients agree for $1\le p\le 60$.

\begin{table}[H]
	\centering
	\renewcommand{\arraystretch}{1.15}
	\setlength{\tabcolsep}{4pt}
	\caption{Coefficient check for $c(p)=d(p)$ for $1\le p\le 60$}
	\label{tab:coeff-check-p60}
	\begin{tabular}{|c|p{0.9\textwidth}|}
		\hline
		Type $X$ & $c(p)=d(p)$ for $p=1,2,\dots,60$ \\
		\hline
		$G_2^{(1)}$ &
		$1,1,1,1,2,4,5,5,6,7,10,15,18,20,23,27,35,47,56,63,73,85,105,133,156,177,203,235,282,343,$\\
		&$399,452,516,593,698,829,954,1079,1225,1398,1622,1892,2161,2436,2753,3123,3583,4126,4680,$\\
		&$5258,5914,6672,7588,8650,9755,10920,12232,13732,15506,17537$ \\
		\hline
		$D_4^{(3)}$ &
		$1,1,1,2,3,3,4,6,7,8,10,14,17,19,23,30,36,41,49,61,72,82,97,119,139,158,184,220,256,291,$\\
		&$337,397,457,518,596,695,796,899,1027,1186,1351,1523,1731,1982,2246,2524,2856,3252,3669,$\\
		&$4111,4630,5240,5891,6584,7389,8322,9319,10388,11618,13032$ \\
		\hline
		$E_6^{(2)}$ &
		$1,1,1,1,2,2,3,3,4,5,6,7,8,10,11,13,15,19,22,25,28,32,38,43,50,56,65,74,84,95,107,122,136,$\\
		&$154,173,198,222,248,276,308,347,386,432,479,536,596,662,734,813,903,996,1103,1218,1352,$\\
		&$1492,1643,1807,1988,2193,2409$ \\
		\hline
		$F_4^{(1)}$ &
		$1,1,1,1,2,2,3,4,4,5,6,9,11,12,14,16,20,23,28,33,37,43,50,62,72,81,92,105,123,140,162,186,$\\
		&$209,237,270,314,357,400,450,507,576,648,733,825,921,1031,1157,1310,1467,1632,1817,2025,$\\
		&$2265,2521,2812,3129,3466,3843,4266,4754$ \\
		\hline
		$E_6^{(1)}$ &
		$1,1,1,2,3,3,4,6,7,8,10,14,17,19,23,30,36,41,49,61,72,82,97,119,139,158,184,220,256,291,$\\
		&$337,397,457,518,596,695,796,899,1027,1186,1351,1523,1731,1982,2246,2524,2856,3252,3669,$\\
		&$4111,4630,5240,5891,6584,7389,8322,9319,10388,11618,13032$ \\
		\hline
		$E_7^{(1)}$ &
		$1,1,1,1,2,2,3,3,4,5,6,7,8,10,11,13,15,19,22,25,28,32,38,43,50,56,65,74,84,95,107,122,136,$\\
		&$154,173,198,222,248,276,308,347,386,432,479,536,596,662,734,813,903,996,1103,1218,1352,$\\
		&$1492,1643,1807,1988,2193,2409$ \\
		\hline
		$E_8^{(1)}$ &
		$1,1,1,1,1,1,2,2,2,2,3,3,4,5,5,5,6,7,8,9,10,11,12,14,15,17,18,20,22,26,29,31,34,37,40,44,$\\
		&$50,54,58,63,70,76,84,92,99,106,116,127,138,150,162,175,189,206,222,240,258,278,300,328$ \\
		\hline
	\end{tabular}
\end{table}

\medskip
\begin{remark}
The Euler-type products coincide for the pairs $(D_4^{(3)},E_6^{(1)})$ and
$(E_6^{(2)},E_7^{(1)})$, since
\[
0\ (\mathrm{mod}\ 4)=\{0,4,8\}\ (\mathrm{mod}\ 12)
\qquad\text{and}\qquad
0\ (\mathrm{mod}\ 9)=\{0,9\}\ (\mathrm{mod}\ 18).
\]
\end{remark}

\appendix 

\section{The congruence conditions}  
\label{sec:congruence-condition}

\begin{table}[H]
	\centering
\scriptsize
\setlength{\tabcolsep}{2pt}
\renewcommand{\arraystretch}{1.5}
\resizebox{\textwidth}{!}{%
	\begin{minipage}[b]{0.485\textwidth} 
		\centering
		\renewcommand{\arraystretch}{1.5}	
		\captionof{table}{$\mathrm{CCon}_{G_2^{(1)}}$}

	
\end{landscape}
\restoregeometry

\end{document}